\documentclass[a4paper]{amsart}
\usepackage{graphicx}
\usepackage[bookmarksnumbered,colorlinks,plainpages]{hyperref}
\usepackage{extarrows}
\usepackage{float}
\usepackage{amsthm}
\usepackage[all,2cell]{xy}
\usepackage{amsfonts}
\usepackage{amsmath}
\usepackage{amssymb}
\usepackage{xypic,leqno,amscd,amssymb,pstricks,latexsym,amsbsy,xypic,mathrsfs,verbatim}
\usepackage{multirow}
\usepackage{booktabs}
\usepackage{makecell}
\usepackage{cases}
\usepackage{appendix}
\usepackage{graphicx,colortbl}
\usepackage{tikz-cd}
\usepackage{etex}
\usepackage{mathtools}
\usepackage[bookmarksnumbered,colorlinks,plainpages]{hyperref}
\hypersetup{
	colorlinks=true,
	citecolor=blue
}

\newcommand{\X}{\mathbb{X}}

\newcommand{\Z}{\mathbb{Z}}

\newcommand{\cut}{\ar@{-}@[|(5)]}

\newcommand{\Hom}{\operatorname{Hom}\nolimits}
\newcommand{\coh}{\operatorname{coh}\nolimits}
\newcommand{\End}{\operatorname{End}\nolimits}
\newcommand{\Ext}{\operatorname{Ext}\nolimits}
\newcommand{\rad}{\operatorname{rad}\nolimits}
\newcommand{\soc}{\operatorname{soc}\nolimits}

\newcommand{\bo}{\operatorname{b}\nolimits}

\newcommand{\RHom}{\mathbf{R}\strut\kern-.2em\operatorname{Hom}\nolimits}

\newcommand{\itrig}{\operatorname{Ind}\nolimits\tau\text{-}\operatorname{rigid}}

\DeclareMathOperator{\Ind}{{Ind}}

\DeclareMathOperator{\moduleCategory}{{mod}} \renewcommand{\mod}{\moduleCategory}
\newcommand{\DDD}{{D}}

\theoremstyle{plain}
\newtheorem{theorem}{Theorem}[section]
\newtheorem{lemma}[theorem]{Lemma}

\newtheorem{proposition}[theorem]{Proposition}
\theoremstyle{definition}
\newtheorem{definition}[theorem]{Definition}
\newtheorem{example}[theorem]{Example}
\newtheorem{remark}[theorem]{Remark}

\numberwithin{equation}{section}
\newtheorem{mainthm}{Theorem}

\tikzset{every picture/.style={line width=0.75pt}} 

\begin{document}

	\title[Representation type of higher preprojective algebras]{On the representation type of higher preprojective algebras of type $A$}

	\author[W. Weng] {Weikang Weng}
 
	\makeatletter \@namedef{subjclassname@2020}{\textup{2020} Mathematics Subject Classification} \makeatother
	
	\subjclass[2020]{16G10, 16G60, 16E10}
	\keywords{higher preprojective algebra,  representation type, $\tau$-tilting finiteness}

	\begin{abstract} 
	In this paper, we establish a finite--tame--wild trichotomy for the representation type of 
	$(n+1)$-preprojective algebras of  type $A_m$.
		We also provide  a complete classification of such algebras with respect to $\tau$-tilting finiteness. Moreover, for $n\ge 2$, we give a precise characterization of when each indecomposable module is $\tau$-rigid.
		 
	\end{abstract}
	
	\maketitle
	
	\section{Introduction}	
	Preprojective algebras were first introduced by Gelfand–Ponomarev \cite{GP}. Since then, they have become important objects in
	the representation theory of algebras and have found applications in various areas of mathematics. For instance, they play a central role in Lusztig’s Lagrangian construction of semicanonical bases \cite{Lu1,Lu2}, as well as in the theory of cluster algebras \cite{GLS}. Furthermore, preprojective algebras of Dynkin type $A$ are closely related to
	submodule categories \cite{RZ}.

	Higher preprojective algebras were developed by Iyama and Oppermann as higher-dimensional
	analogues of classical preprojective algebras in the framework of
	higher Auslander--Reiten theory \cite{IO1,IO2}. For an algebra $\Lambda$ of global
	dimension at most $n$, the $(n+1)$-preprojective algebra is defined as the  tensor algebra
	\[
	\Pi_{n+1}(\Lambda)
	:=
	T_{\Lambda}\Ext_{\Lambda}^{n}(D\Lambda,\Lambda).
	\]
	If $\Lambda$ is $n$-representation-finite, then
	$\Pi_{n+1}(\Lambda)$ is self-injective, and its
	stable module category is closely related to the corresponding higher
	Amiot cluster category \cite{IO1,IO2}. For an algebra $\Lambda$ of global dimension at most two,
	the $3$-preprojective algebra $\Pi_3(\Lambda)$  is isomorphic to a
	Jacobian algebra associated with a quiver with potential. At the differential graded (=dg) level, the derived
	$3$-preprojective algebra is quasi-isomorphic to the corresponding
	Ginzburg dg algebra, providing a link with generalized cluster
	categories and Calabi--Yau completions
	\cite{HI, KV}.
	
	 Let ${\bf k}$ be an algebraically closed field. Let $A_m$ be the linearly oriented quiver $$1\longrightarrow 2 \longrightarrow \dots \longrightarrow m,$$ and $A_m^1={\bf k}A_m$ be the path algebra of $A_m$. Recursively, let $A_m^{n+1}$ be the
	$n$-Auslander algebra of $A_m^n$. In this paper we study the family
	\[
	\Pi^{(n,m)}:=\Pi_{n+1}(A_m^n),
	\]
	which we call the \emph{higher preprojective algebra of type $A_m$}.
	
	When $n=1$, it is well-known that the algebra $\Pi^{(1,m)}$ coincides with the classical preprojective algebra of  type $A_m$.  It is representation-finite precisely when
	$m\leq4$; 
	$\Pi^{(1,5)}$ is tame, whereas $\Pi^{(1,m)}$ is wild for $m\geq6$; see
	\cite[Proposition~3.3]{GLS} and the references therein.
	The main purpose of this paper is to determine the representation type of
	$\Pi^{(n,m)}$ for  $n\ge 2$ and $m\ge 1$, and to compute the number of isomorphism classes of indecomposable
	modules in the representation-finite cases. Together with the known results for the case $n=1$, 
	we have the following theorem.

	\begin{mainthm} \label{main thm}
		Let $m,n\geq1$.  The representation type of ${\Pi}^{(n,m)}$ is given by the
		finite--tame--wild trichotomy as follows.
		\begin{equation*}
			\renewcommand{\arraystretch}{1.15}
			\begin{array}{c|c|c|c}
				\toprule
				n & \text{representation-finite} & \text{tame} & \text{wild} \\
				\midrule
				1 & m \leq 4 & m = 5 & m \geq 6 \\
				2 & m \leq 3 & m = 4 & m \geq 5 \\
				n \geq 3 & m \leq 3 & \text{none} & m \geq 4 \\
				\bottomrule
			\end{array}
		\end{equation*}
			In the representation-finite cases, we have
			\begin{equation} \label{numble}
			|\Ind {\Pi}^{(n,m)}|=
			\begin{cases}
				1, & m=1, \\[3pt]
				2(n+1), & m=2, \\[3pt]
				\dfrac{(n+1)(n+2)(n+3)}{2}, & m=3, \\[8pt]
				40, & (n,m)=(1,4),
			\end{cases}
			\end{equation}
			where $|\Ind {\Pi}^{(n,m)}|$ denotes the number of isomorphism classes of indecomposable ${\Pi}^{(n,m)}$-modules.
	\end{mainthm}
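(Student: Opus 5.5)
The proof will rest on an explicit quiver-with-relations description of $\Pi^{(n,m)}$. By Iyama--Oppermann, $A_m^n$ is $n$-representation-finite, so $\Pi^{(n,m)}$ is self-injective. Its Gabriel quiver $Q^{(n,m)}$ has vertex set
\[
\{(x_0,\dots,x_n)\in\Z_{\ge0}^{n+1} : \textstyle\sum x_i=m-1\},
\]
with arrows $x\to x+e_{i+1}-e_i$ for indices taken mod $n+1$. The relations are the commutativity relations: two paths of length two with the same endpoints are identified, and a path is zero when the middle vertex is missing. This is the standard ``$n$-simplex'' description of higher preprojective algebras of type $A$.

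\textbf{Row $n=1$.} This row is classical and is quoted from \cite[Proposition~3.3]{GLS}.

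\textbf{Finite cases.}
\begin{itemize}
\item For $m=1$ the algebra is $\bf k$.
\item For $m=2$ the quiver is an oriented $(n+1)$-cycle with all paths of length two vanishing. This is a self-injective Nakayama algebra of Loewy length $2$, so it has $2(n+1)$ indecomposables: $n+1$ simples and $n+1$ projectives.
\item For $m=3$ the vertices are the $e_i+e_j$, and the algebra has Loewy length $3$.
\end{itemize}
For $m=3$ I would pass to the stable category. Because $\Pi$ is self-injective, $\underline{\mod}\,\Pi^{(n,3)}$ is equivalent to an $(n+2)$-Amiot cluster category, or more concretely to $\mathcal{D}^b(\mod A^n_{2})$ modulo a suitable autoequivalence (Iyama--Oppermann). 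Alternatively I would compute the Auslander--Reiten quiver directly by knitting from the projectives, which are small. Showing that the AR quiver has a finite component then gives representation-finiteness by Auslander's theorem, and counting its vertices gives $(n+1)(n+2)(n+3)/2$. For the count I would look for a parametrization of the indecomposables, for example by intervals or by pairs consisting of a vertex and a radical layer. A plausible decomposition is
\[
\frac{(n+1)(n+2)(n+3)}{2} = \binom{n+2}{2} \;(\text{projectives}) \;+\; \text{non-projectives},
\]
where the non-projectives are organised into $\tau$-orbits whose period is computed from the Nakayama permutation.

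\textbf{Wild and tame cases.} The plan is to reduce to small subquivers.
\begin{itemize}
\item \emph{Reduction.} For each $m\ge 4$ and $n\ge 2$, find an idempotent $e$ such that $e\Pi e$ or a quotient $\Pi/\langle e\rangle$ is a known wild algebra. Wildness passes from such a corner or quotient up to $\Pi$, by Drozd's theorem and standard functorial arguments. Since $Q^{(n,m)}$ contains $Q^{(n,m')}$ and $Q^{(n',m)}$-like pieces as full subquivers for $m'\le m$ and $n'\le n$, it is enough to treat the minimal cases $(2,5)$ and $(3,4)$.
\item \emph{Minimal wild cases.} For $(3,4)$ and $(2,5)$, I would find a wild hereditary or concealed algebra, for instance one with a quiver containing $\tilde{\tilde D}$-type or a wild star, either as a factor algebra cut out by radical-square-zero pieces or by Galois covering arguments. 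The candidate to locate is a vertex whose neighbourhood yields a wild radical-square-zero algebra, detected through its separated quiver.
\item \emph{Tame case $(2,4)$.} I would use covering theory: take the universal Galois cover $\tilde\Pi$ given by the $\Z$-grading, show it is locally support-finite, and show that its finite convex subcategories are tame concealed or tubular algebras, checked via Tits forms being weakly nonnegative. Dowbor--Skowroński then transfers tameness to $\Pi$. Non-finiteness follows by exhibiting a Euclidean subcategory, hence a one-parameter family of modules.
\end{itemize}

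\textbf{Main obstacle.} The hard step is the tame case $(2,4)$. Proving tameness rather than wildness requires controlling every finite convex subcategory of the cover, and the relation structure is intricate there. I would first attempt to identify $\Pi^{(2,4)}$, or a degeneration of it, with a known tame self-injective algebra of tubular or Euclidean type via its stable AR quiver. Failing that, I would fall back on Galois covering plus a computer-aided Tits form check.
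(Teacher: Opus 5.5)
\paragraph{The tame case $(2,4)$.} Your treatment of $n=1$ and of $m=1,2$ matches the paper. The tame entry $(n,m)=(2,4)$, however, is not established.

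In your covering route, the hard parts are exactly the unverified ones: that the $\Z$-cover is locally support-finite, and that all its finite convex subcategories are tame. There is no reason to expect them all to be tame concealed or tubular. Also, weak nonnegativity of the Tits form characterizes tameness only under extra hypotheses such as strong simple connectedness, which you have not checked.

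The missing idea is an identification that makes tameness available off the shelf. Since $\operatorname{gl.dim}A_4^2=2$, $\Pi^{(2,4)}=\Pi_3(A_4^2)$ is the Jacobian algebra of $(Q^{(2,4)},W)$, where $W$ is the signed sum of the small triangles. This quiver with potential appears in Jasso's classification \cite{Ja} of self-injective cluster-tilted algebras, as one of tubular type $(2,3,6)$. Hence $\Pi^{(2,4)}\simeq\End_{\mathcal C_{\X}}(T)$ for a cluster-tilting object $T$ in a tubular cluster category, and \cite{GGS} gives tameness of polynomial growth. Your fallback of matching $\Pi^{(2,4)}$ with a known tame self-injective algebra points this way. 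Without a concrete class and criterion, it remains a hope.

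\paragraph{The wild cases.} Your reduction in $m$ is sound: $\Pi^{(n,m')}\cong\Pi^{(n,m)}/\langle 1-e\rangle$, where $e$ is the sum of the idempotents at vertices with $x_1\ge m-m'$. The reduction in $n$ is not.

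\begin{itemize}
\item $Q^{(n,m)}$ need not contain $Q^{(n',m)}$ as a full subquiver. Already $Q^{(n,2)}$, an oriented $(n+1)$-cycle, contains no $(n'+1)$-cycle $Q^{(n',2)}$.
\item A face $\{x_{n'+2}=\dots=x_{n+1}=0\}$ carries only the quiver of $A_m^{n'}$. The cyclic arrow becomes a path leaving the face, and that path can vanish. For instance, for the face idempotent $e$ one has $e\Pi^{(2,2)}e\cong\mathbf{k}A_2\not\cong\Pi^{(1,2)}$.
\end{itemize}

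So $(3,4)$ does not give $(n,4)$ for $n\ge 4$. Moreover, you never exhibit the wild piece, even for $(2,5)$ and $(3,4)$.

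The paper does this directly. It passes to $B=\Pi^{(n,m)}/\rad^2\Pi^{(n,m)}$ and lists explicit vertices of its separated quiver that span $\widetilde A_5$ with one leaf attached. For $n\ge3$ these vertices involve only the coordinates $1,2,3,4,n+1$ plus a shift by a multiple of $e_1$, so they work uniformly in $n$ and $m$. Wildness then transfers via $\underline{\mod}\,B\simeq\underline{\mod}\,\Sigma$ and \cite{Kr}. The same $\widetilde A_5$ also shows that $\Pi^{(2,4)}$ is representation-infinite.

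\paragraph{The case $m=3$.} Your identification is off by one. The stable $n$-Auslander algebra of $A_3^n$ is $A_2^{n+1}$, which has $n+2$ vertices. Therefore $\underline{\mod}\,\Pi^{(n,3)}\simeq\mathcal C^{n+1}_{A_2^{n+1}}\simeq\DDD^{\bo}(\mathbf{k}A_{n+2})/\tau^{-1}[n]$, which is $(n+1)$-Calabi--Yau, not $(n+2)$. A fundamental domain has $n\binom{n+3}{2}+(n+2)$ objects. Adding the $\binom{n+2}{2}$ projectives gives $(n+1)(n+2)(n+3)/2$. With your $A_2^n$, of type $A_{n+1}$, the count cannot come out right: for $n=2$ one gets $21$ or $27$ instead of $30$.
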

	
	Recently, the representation-finite higher Auslander algebras $A_m^n$ were classified in \cite{Li}. 
		Since $A_m^n$ is a
	 quotient algebra of ${\Pi}^{(n,m)}$ by Theorem \ref{Preprojective Auslander cut},  the representation-finiteness of ${\Pi}^{(n,m)}$ implies that of $A_m^n$ for $n\geq 2$ and $m\leq 3$. This recovers the representation-finiteness assertions in \cite[Propositions~4.4 and~4.5]{Li}. 	
	On the other hand, if $A_m^n$ is representation-infinite, then so is
	${\Pi}^{(n,m)}$. Thus, for
	$n\geq 2$ and $m\geq 5$, or $n\geq 3$ and $m\geq 4$, $A_m^n$  is representation-infinite
	 implies that so is ${\Pi}^{(n,m)}$; see \cite[Theorem~4.8]{Li}. However, this argument does not
	cover the exceptional case $(n,m)=(2,4)$, since $A_4^2$ is
	representation-finite. Moreover, it does not distinguish between tame and
	wild representation type. Theorem~\ref{main thm} completes this picture by
	providing a finite--tame--wild trichotomy.

	An important problem in $\tau$-tilting theory is to classify $\tau$-tilting finite algebras, namely, finite-dimensional algebras having only finitely many isomorphism classes of indecomposable $\tau$-rigid modules,  or equivalently, basic support $\tau$-tilting
	modules; see \cite{DIJ}. For this, we determine precisely when ${\Pi}^{(n,m)}$
	is $\tau$-tilting finite.

		\begin{mainthm} \label{main thm 2} Let $m,n\geq1$. 
				Then  ${\Pi}^{(n,m)}$ is $\tau$-tilting finite if and only if
				$n=1$ or $m\le 3$.
			\end{mainthm}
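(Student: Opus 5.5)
The proof naturally splits into three cases.

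\emph{Case $n=1$.} Here $\Pi^{(1,m)}$ is the classical preprojective algebra of type $A_m$. By the result of Mizuno, its support $\tau$-tilting modules are in bijection with the elements of the Weyl group $\mathfrak{S}_{m+1}$. In particular they are finite in number, so $\Pi^{(1,m)}$ is $\tau$-tilting finite for every $m$.

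\emph{Case $m\le 3$.} By Theorem~\ref{main thm}, $\Pi^{(n,m)}$ is representation-finite for all $n$. A representation-finite algebra has only finitely many indecomposable modules, so a fortiori finitely many indecomposable $\tau$-rigid ones, and it is $\tau$-tilting finite.

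\emph{Case $n\ge 2$, $m\ge 4$.} This is the substantive direction: we must show $\tau$-tilting infiniteness. The standard tool is the fact that $\tau$-tilting finiteness passes to quotient algebras, since for $B=A/I$ the map $\operatorname{s\tau-tilt}A\to\operatorname{s\tau-tilt}B$ given by $M\mapsto M/IM$ is surjective (Demonet--Iyama--Jasso / Adachi). The plan is therefore to find a $\tau$-tilting infinite quotient of $\Pi^{(n,m)}$.
\begin{enumerate}
\item \emph{Reduction in $m$.} Using the description of $\Pi^{(n,m)}$ by quiver and relations (the cut/grading description from Theorem~\ref{Preprojective Auslander cut}), observe that for $m'\le m$ the algebra $\Pi^{(n,m')}$ is an idempotent quotient $\Pi^{(n,m)}/\langle e\rangle$, obtained by killing the vertices outside a sub-simplex. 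It then suffices to treat $m=4$.
\item \emph{Reduction in $n$.} Similarly, a face of the $n$-simplex of vertices should realize $\Pi^{(2,4)}$, or at least a suitable subquotient, as an idempotent quotient of $\Pi^{(n,4)}$. It then suffices to treat $(n,m)=(2,4)$.
\item \emph{The case $(n,m)=(2,4)$.} Find a further quotient of $\Pi^{(2,4)}$, by vertices and arrows, that is a $\tau$-tilting infinite algebra. The natural candidates are:
\begin{itemize}
\item a hereditary algebra of Euclidean type, for instance a cycle of type $\tilde A$ with non-cyclic orientation, or $\tilde D_4$;
\item a known $\tau$-tilting infinite gentle or special biserial algebra, obtained by cutting $\Pi^{(2,4)}$ to a band-containing subquiver.
\end{itemize}
Representation-infinite hereditary algebras have infinitely many preprojective modules, all of which are $\tau$-rigid, so such a quotient would finish the argument.
\end{enumerate}

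\emph{Main obstacle.} The hard step is exhibiting an explicit quotient of $\Pi^{(2,4)}$ that is $\tau$-tilting infinite. The quiver of $\Pi^{(2,4)}$ is triangular: it has vertices $(a,b,c)$ with $a+b+c=3$, arrows in three directions, and commutativity and zero relations. I would first try to kill vertices so that the remaining full subquiver contains an unoriented hexagon or square cycle on which all relations become monomial or vanish, yielding a $\tilde A$-type hereditary quotient.

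If no such quotient exists, the fallback is to produce a one-parameter family directly. Indecomposable $\tau$-rigid modules correspond to $g$-vectors. One would exhibit a cone of $g$-vectors, or use a brick-infinite argument: by Demonet--Iyama--Jasso, $\tau$-tilting infiniteness is equivalent to having infinitely many bricks. For that, one can use the tame family of $\Pi^{(2,4)}$-modules from Theorem~\ref{main thm}, checking that its generic members are bricks.

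The reductions in the first two steps should be routine checks on the quiver with relations.
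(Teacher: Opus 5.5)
Your cases $n=1$ and $m\le 3$ agree with the paper. Your reduction in $m$ is sound: killing the vertices with $x_1<m-4$ leaves exactly $\Pi^{(n,4)}$, which is what the paper's shift $s=(m-4)e_1$ encodes. The genuine gap is the reduction in $n$.

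Arrows of $Q^{(n,m)}$ have the form $x\to x-e_i+e_{i+1}$, with indices mod $n+1$. So for $n\ge 3$ a face spanned by three coordinates carries at most two arrow types. The corresponding idempotent quotient of $\Pi^{(n,4)}$ is then a copy of the Auslander algebra $A_4^2$ (or something smaller). That algebra is representation-finite, hence $\tau$-tilting finite, so it is useless for proving $\tau$-tilting infiniteness.

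In fact $\Pi^{(2,4)}$ is not a quotient of $\Pi^{(n,4)}$ by any ideal when $n\ge 3$. The quiver $Q^{(2,4)}$ has oriented $3$-cycles. But $f_1,\dots,f_n$ are linearly independent and $f_1+\dots+f_{n+1}=0$, so every oriented cycle of $Q^{(n,m)}$ uses all $n+1$ arrow types equally often and has length divisible by $n+1$. Since the quiver of a quotient algebra is a subquiver of the original, no such quotient exists. Hence the case $n\ge 3$ cannot be reduced to $(2,4)$ and needs its own configuration, which your plan does not supply. Even for $(2,4)$, your step 3 stops at listing candidates.

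What is missing is an explicit Euclidean hereditary quotient valid for all $n\ge 2$. Take
$a=e_1+e_3+e_{n+1}$, $b=2e_1+e_3$, $c=2e_1+e_2$, $d=e_1+2e_2$, $e=2e_2+e_{n+1}$, $f=e_2+e_3+e_{n+1}$ in $Q_0^{(n,4)}$.
For $n=2$ these are the six vertices surrounding $(1,1,1)$, so your hexagon idea is the right one. For $n\ge 3$ they involve the four coordinates $n+1,1,2,3$ and lie in no three-coordinate face.

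Checking the differences of these vectors shows that the full subquiver of $Q^{(n,4)}$ on them is exactly $a\to b\leftarrow c\to d\leftarrow e\to f\leftarrow a$, with arrows of types $n+1,2,1,n+1,2,1$. It has no paths of length two. So killing the idempotents of all other vertices gives a quotient of $\Pi^{(n,4)}$ isomorphic to the path algebra of this $\widetilde A_5$ quiver. Your preprojective argument plus closure under quotients then finishes the proof.

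The paper uses the same hexagon, but phrases it as a single subquiver of the separated quiver of $\Pi^{(n,m)}/\rad^2\Pi^{(n,m)}$ and invokes Adachi's criterion (Theorem~\ref{Ad theorem}). Once the hexagon is supplied, your hereditary-quotient version is a slightly more elementary route to the same conclusion, since it avoids that theorem.
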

		
		The higher dimensional case behaves differently from the classical one.
		For $n=1$,  $\Pi^{(1,m)}$ is $\tau$-tilting finite for
		all $m\ge 1$, but it is representation-finite if and only if $m\le4$. 	The number of indecomposable 
		$\tau$-rigid $\Pi^{(1,m)}$-modules does not always equal $|\Ind {\Pi}^{(1,m)}|$.
		By contrast, for $n\ge2$, the relevant
		conditions coincide, as shown in the following theorem.
		
		\begin{mainthm} \label{main thm 3} Let $n\geq 2$. Then the following conditions are equivalent:
		\begin{enumerate}
			\item  $m\le 3$;
			\item 	$\Pi^{(n,m)}$ is representation-finite;
			\item  $\Pi^{(n,m)}$ is $\tau$-tilting finite;
			\item  each indecomposable  $\Pi^{(n,m)}$-module is $\tau$-rigid.
		\end{enumerate}
	\end{mainthm}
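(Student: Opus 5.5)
The plan is to establish the cycle of implications $(1)\Rightarrow(4)\Rightarrow(3)\Rightarrow(1)$ and to treat $(1)\Leftrightarrow(2)$ separately, since that equivalence is the $n\ge2$ rows of Theorem~\ref{main thm}: representation-finite exactly when $m\le 3$. The implication $(2)\Rightarrow(3)$ is formal, because a representation-finite algebra has only finitely many indecomposable $\tau$-rigid modules. Likewise $(4)$ together with $(2)$ gives $(3)$. So the real content lies in two implications: $(1)\Rightarrow(4)$, and $(3)\Rightarrow(1)$, the latter equivalent to showing that $\Pi^{(n,m)}$ is $\tau$-tilting infinite for $n\ge2$, $m\ge4$. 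Theorem~\ref{main thm 2} also records this last fact.

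For $(1)\Rightarrow(4)$ I would work case by case with $m\le 3$, using the explicit description of $\Pi^{(n,m)}$ obtained in proving Theorem~\ref{main thm}.
\begin{itemize}
\item For $m=1$ the algebra is ${\bf k}$, and the claim is trivial.
\item For $m=2$ the algebra is a self-injective Nakayama algebra with $n+1$ simples and Loewy length $2$ (radical square zero cyclic). The $2(n+1)$ indecomposables are the simples and the projectives. Here $\tau S_i = S_{i\pm1}$, and $\Hom(S_i,\tau S_i)=0$ since $n+1\ge 3>1$. This is where $n\ge2$ matters: for $n=1$ one gets $\tau S=S$.
\item For $m=3$ I would use the Auslander--Reiten quiver, computed in the proof of Theorem~\ref{main thm} via the covering or Galois-covering argument. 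For each of the $(n+1)(n+2)(n+3)/2$ indecomposables $M$, check $\Hom(M,\tau M)=0$. Since $\Pi^{(n,m)}$ is self-injective, $\tau=\Omega^2\nu$ is computable by syzygies. Alternatively, I would work in the repetitive or covering category, where the stable AR quiver is of the form $\mathbb{Z}\Delta$ (type $D_4$ for $n=2$, since the count is $30 = 3\cdot 4\cdot 5/2$). There one checks via hammocks or mesh additivity that the support of $\Hom(M,-)$ does not reach $\tau M$ modulo the group action.
\end{itemize}

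For $(3)\Rightarrow(1)$, take $m\ge4$. I would first use Theorem~\ref{Preprojective Auslander cut} and the reduction to a quotient or idempotent subalgebra. $\tau$-tilting finiteness passes to quotient algebras and to idempotent subalgebras $e\Pi e$. It therefore suffices to exhibit, inside $\Pi^{(n,m)}$ for $n\ge2$, $m\ge4$, a $\tau$-tilting infinite quotient. The natural choice is to start from $\Pi^{(2,4)}$ and show that $\Pi^{(n,m)}$ has a quotient of the form $\Pi^{(2,4)}/I$ or $e\Pi^{(n,m)}e\cong\Pi^{(2,4)}$-like. Then one finds in $\Pi^{(2,4)}$ a quotient that is a tame hereditary (Euclidean) algebra, or a gentle algebra with a band, which is $\tau$-tilting infinite. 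Infinitely many bricks would also suffice, by the brick characterization of DIJ.

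I expect the main obstacle to be locating this concrete $\tau$-tilting infinite quotient and making the reduction from general $(n,m)$ to a base case uniform. My first attempt would be to find, in the quiver of $\Pi^{(2,4)}$, a full subquiver of type $\tilde A$ whose relations vanish after killing suitable arrows, giving the Kronecker-type family of bricks. I would then check that the same subquiver persists in the quivers of $\Pi^{(n,m)}$, since these quivers embed compatibly as $n$ and $m$ grow.
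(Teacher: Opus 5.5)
\paragraph{The gap: nothing is derived from (4).} Your cycle is $(1)\Rightarrow(4)\Rightarrow(3)\Rightarrow(1)$, but the only justification you give for $(4)\Rightarrow(3)$ is that ``(4) together with (2) gives (3)''. That assumes (2), and once (2) is available, (3) holds whatever (4) says.

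Under (4), the indecomposable $\tau$-rigid modules are exactly the indecomposable modules. So $\tau$-tilting finiteness is the same as representation-finiteness, and $(4)\Rightarrow(3)$ is equivalent to $(4)\Rightarrow(2)$. This is precisely the content you skipped: for $m\ge 4$ you must exhibit an indecomposable $\Pi^{(n,m)}$-module that is not $\tau$-rigid. Theorems~\ref{main thm} and~\ref{main thm 2} do not give this on their own, since a representation-infinite, $\tau$-tilting infinite algebra could a priori have all of its indecomposables $\tau$-rigid.

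\paragraph{How to close it.} The paper uses a result of Mousavand--Paquette: if every indecomposable module is $\tau$-rigid, the algebra is locally representation-directed, hence representation-finite. This contradicts Theorem~\ref{main thm} for $m\ge4$.

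A more self-contained alternative runs as follows. A $\tau$-rigid $M$ satisfies $\Ext^1(M,M)\cong D\overline{\Hom}(M,\tau M)=0$, so its orbit in the module variety is open. Hence only finitely many $\tau$-rigid modules share a given dimension vector. For $m\ge4$, the $\widetilde A_5$ full subquivers of $Q^s_B$ found in Section~3 carry a one-parameter family of pairwise non-isomorphic, indecomposable, non-projective $\Sigma$-modules of a fixed dimension vector. By Proposition~\ref{2.6}, these come from indecomposable $B$-modules, hence $\Pi^{(n,m)}$-modules, of a common dimension vector. So not all of them are $\tau$-rigid. Alternatively, invoke Brauer--Thrall~II.

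\paragraph{Smaller points.} For $(3)\Rightarrow(1)$, just cite Theorem~\ref{main thm 2} as the paper does; your speculative reduction to a ``$\Pi^{(2,4)}$-like'' idempotent subalgebra is unnecessary.

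For $m=3$, your route to $(1)\Rightarrow(4)$ differs from the paper's and is only sketched. The paper never computes $\tau$. It shows every indecomposable is a brick:
\begin{itemize}
\item projectives, because the nonzero length-two paths from $x$ end at $\rho(x)\ne x$;
\item non-projectives, because $MJ^2=0$ and $x^+$, $x^-$ lie in different $\mathbb A_{n+2}$-components of $Q^s$, giving $\End_B(M)\hookrightarrow\End_\Sigma(F(M))=\mathbf k$.
\end{itemize}
It then combines $\tau$-tilting finiteness with the Demonet--Iyama--Jasso bijection between indecomposable $\tau$-rigid modules and bricks, and concludes by counting.

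If you instead check $\Hom(M,\tau M)=0$ directly, note the limitation of the equivalence $\underline{\mod}\,\Pi^{(n,3)}\simeq\DDD^{\bo}(\mathbf kA_{n+2})/\tau^{-1}[n]$. It only controls $\underline{\Hom}(M,\tau M)\cong D\Ext^1(M,M)$. You must also rule out nonzero maps $M\to\tau M$ factoring through projectives, so the stable AR quiver alone does not suffice.

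Two minor errors:
\begin{itemize}
\item The tree class is $\mathbb A_{n+2}$ (so $\mathbb A_4$ for $n=2$), not $\mathbb D_4$; the count $30$ does not determine it.
\item For $m=2$, one has $\tau S_i\neq S_i$ as soon as there are at least two simples, so $n=1$ is not special there.
\end{itemize}
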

	
	Throughout this paper, let ${\bf k}$ be an algebraically closed field  and $D(-):=\Hom_{\mathbf{k}}(-,\mathbf{k})$. Fix integers $m,n\ge 1$.   By an algebra we mean a basic and finite dimensional $\mathbf{k}$-algebra, and by a module we mean a finite dimensional right module.
	For an algebra $\Lambda$, we denote by $\mod \Lambda$ the category of  $\Lambda$-modules, by $\underline{\mod}\, \Lambda$ the stable category, by $\DDD^{\bo}(\Lambda)$ the  bounded derived category of $\mod \Lambda$.

	\section{Preliminaries}
	In this section, we collect some results needed later.

	\subsection{Higher preprojective algebras of type $A_m$}	
	An algebra $\Lambda$ is called \emph{$n$-representation-finite} if $\Lambda$ has global dimension  at most $n$ and it admits an $n$-cluster tilting module $M\in \mod \Lambda$. Let $\Lambda$ be an $n$-representation-finite algebra. 
	The \emph{$(n+1)$-preprojective algebra of $\Lambda$}  is
	\begin{equation*}
		\Pi={\Pi}_{n+1}(\Lambda):=T_{\Lambda}\Ext_{\Lambda}^n(D\Lambda,\Lambda),
	\end{equation*}	
	that is,  the tensor algebra of the $\Lambda$-$\Lambda$ bimodule $\Ext_{\Lambda}^n(D\Lambda,\Lambda)$ over $\Lambda$. Then $\Pi$ is finite dimensional and self-injective by \cite[Corollary 3.4]{IO2}. Hence its stable module category $\underline{\mod} \, \Pi$ is a triangulated category.  As a $\Lambda$-module,  $\Pi$ is the unique basic $n$-cluster tilting module \cite[Theorem 2.21]{IO2} and we
	write
	\[
	\Gamma=\underline{\End}_{\Lambda}(\Pi)
	\]
	for its stable $n$-Auslander algebra. 
	It  follows from \cite[Theorem 4.15]{IO2} that there is a  triangle equivalence
	\begin{equation}\label{IO-equivalence}
		\underline{\mod}\,\Pi
		\simeq
		\mathcal C_{\Gamma}^{n+1}		
	\end{equation}
	between the stable module category of $\Pi$ and the $(n+1)$-Amiot cluster category of $\Gamma$. 
	In particular, if $\Gamma$ is derived equivalent to a hereditary algebra $A$, then by \cite[Theorem 7.1]{Ke} there is a triangle equivalence
	\begin{equation}\label{eq:dynkin-orbit}
		\underline{\mod}\,\Pi
		\simeq
		\DDD^{\bo}(A)/\tau^{-1}[n],				
	\end{equation}
	where $\tau^{-1}$ denotes the inverse of Auslander–Reiten translation and $[n]$ denotes the $n$-th suspension in $\DDD^{\bo}(A)$.
	
	Let $A_m$ be the linearly oriented quiver $$1\longrightarrow 2 \longrightarrow \dots \longrightarrow m$$ and $A_m^1={\bf k}A_m$ be the path algebra of $A_m$. Recursively, let $A_m^{n+1}$ be the
	$n$-Auslander algebra of $A_m^n$, that is, $A_m^{n+1}=\End_{A_m^{n}}(M^n_m)$, where $M^n_m$ is the unique $n$-cluster tilting $A_m^{n}$-module. The algebra 	
	\begin{equation*}
		{\Pi}^{(n,m)}:={\Pi}_{n+1}(A_m^n)
	\end{equation*}	
	is called the \emph{$(n+1)$-preprojective algebra of type $A_m$}. 	
	The algebras $A_m^n$ and ${\Pi}^{(n,m)}$ have combinatorial presentations as follows.
	
	\begin{definition} \cite[Definition 5.1]{IO1} \label{def.qns} Let $m,n \ge 1$.
		\begin{enumerate}
			\item Let
			\[
			Q_0^{(n,m)}:=
			\left\{x=(x_1,\ldots,x_{n+1})\in\mathbb Z_{\geq0}^{n+1}
			\ \middle|\ \sum_{i=1}^{n+1}x_i=m-1\right\}.
			\]
			Write $e_i$ for the $i$-th standard basis vector and put
			\[
			f_i=-e_i+e_{i+1}~(1\leq i\leq n),
			\quad f_{n+1}=-e_{n+1}+e_1.
			\]
			Whenever $x,x+f_i\in Q_0^{(n,m)}$, there is an arrow
			\[
			i:x \to x+f_i.
			\]
		
			\item  The $\mathbf{k}$-algebra $\widehat{\Lambda}^{(n,m)}$ is defined as the quiver algebra of $Q^{(n,m)}$ with the following relations:		
			for any $x\in Q_0^{(n,m)}$ and $i,j\in \{1, \ldots, n+1\}$ satisfying $x+f_i$, $x+f_i+f_j\in Q_0^{(n,m)}$,
			\[(x \xrightarrow{i} x+f_i \xrightarrow{j} x+f_i+f_j)
			=\left\{\begin{array}{cl}
				(x \xrightarrow{j} x+f_j \xrightarrow{i} x+f_i+f_j) & \text{if } x+f_j\in Q_0^{(n,m)},\\
				0&\text{otherwise.}
			\end{array}\right.\]
		\end{enumerate}
	\end{definition}
	
	Note that the algebra $\widehat{\Lambda}^{(n,m)}$ has $\binom{m+n-1}{n}$ primitive idempotents.
	
	\begin{theorem}[{See \cite[Theorems~5.6, 5.7 and Proposition~5.48]{IO1}}] \label{Preprojective Auslander cut}
	There are isomorphisms 
	\[
	{\Pi}^{(n,m)}\simeq \widehat{\Lambda}^{(n,m)} \quad \text{and} \quad 
	A_m^n \simeq \widehat{\Lambda}^{(n,m)}/(C_0)
	\] of $\mathbf{k}$-algebras, 	where $C_0$ is the set of all arrows of type $n+1$.
	\end{theorem}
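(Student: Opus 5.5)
The plan is to prove the two isomorphisms in sequence, induct on $n$, and then pin down the relations by a dimension count.

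\textbf{Step 1: the higher Auslander algebra.} First I would establish $A_m^n\simeq \widehat{\Lambda}^{(n,m)}/(C_0)$ by induction on $n$. For $n=1$, $Q_0^{(1,m)}$ is the set of pairs $(x_1,x_2)$ with $x_1+x_2=m-1$. The arrows of type $1$ form the linearly oriented $A_m$. The relations of $\widehat{\Lambda}^{(1,m)}$ involving only type-$1$ arrows are vacuous, so the quotient by $C_0$ is ${\bf k}A_m$.

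For the inductive step, assume $A_m^n$ is the quotient of ${\bf k}Q^{(n,m)}$ by $C_0$ and the stated relations. I would describe the unique $n$-cluster tilting module $M_m^n$ explicitly. Its indecomposable summands should be the interval-type modules determined by pairs of vertices $x\le y$ in the natural poset. These correspond bijectively to $Q_0^{(n+1,m)}$ via $x\mapsto$ (the $n+2$ coordinates recording the successive gaps). Nonzero morphisms between these summands factor through chains of ``one-step'' maps. Those one-step maps are the irreducible maps, and they translate into the arrows of types $1,\dots,n+1$ in $Q^{(n+1,m)}$. A commuting square of irreducible maps gives the commutativity relation. When the intermediate module would lie outside the cluster tilting subcategory, the composite vanishes, which gives the zero relation. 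This reproves that $\End(M_m^n)$ is $\widehat{\Lambda}^{(n+1,m)}/(C_0)$.

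\textbf{Step 2: the quiver of the preprojective algebra.} Next I would identify the quiver of $\Pi^{(n,m)}=T_\Lambda\Ext^n_\Lambda(D\Lambda,\Lambda)$ with $\Lambda=A_m^n$. Since $\Pi$ is generated in tensor degrees $0$ and $1$, its quiver is the quiver of $\Lambda$ plus one arrow $y\to x$ for each basis element of the top of the bimodule $E=\Ext^n_\Lambda(D\Lambda,\Lambda)$. Equivalently, these extra arrows are counted by $\Ext^n_\Lambda(S_x,S_y)$, which is the Keller/Calabi–Yau completion description.

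The relations of $\Lambda$ are quadratic and of ``commutativity/zero'' type, so $\Lambda$ is Koszul. Its Koszul complex at a vertex $x$ is indexed by subsets $I\subseteq\{1,\dots,n\}$, with the term in homological degree $|I|$ at $x+\sum_{i\in I}f_i$. It follows that $\Ext^n(S_x,S_y)\ne0$ exactly when $y=x+f_1+\dots+f_n=x-f_{n+1}$. So the new arrows are precisely the arrows of type $n+1$.

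\textbf{Step 3: relations and the main obstacle.} The hard part is showing that the relations of $\Pi$ are exactly those of $\widehat{\Lambda}^{(n,m)}$. These are the mixed commutativity and zero relations involving type $n+1$ arrows.

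First I would check that these relations hold in $\Pi$. The degree-one part $E\simeq\tau_n^{-}\Lambda$ has the same shape at every vertex, namely a shifted copy of the indecomposable projectives. Left and right multiplication by arrows of $\Lambda$ on $E$ therefore commute as in $\widehat{\Lambda}^{(n,m)}$, and a composite vanishes when it leaves $Q_0^{(n,m)}$. This gives a surjection $\widehat{\Lambda}^{(n,m)}\twoheadrightarrow\Pi^{(n,m)}$.

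To prove injectivity I would compare dimensions, using the grading by the number of type-$(n+1)$ arrows on one side and tensor degree on the other. On the $\Pi$ side, $\Pi=\bigoplus_k\tau_n^{-k}\Lambda$. Its summands are the indecomposables of $M_m^n$, counted via Step 1 by $Q_0^{(n+1,m)}$-type data, and their dimension vectors are intervals. On the $\widehat{\Lambda}^{(n,m)}$ side, the relations allow any path to be rewritten into a normal form: a sorted sequence of steps $f_i$ that stays inside the simplex. The number of such normal forms from $x$ to $y$ is $0$ or $1$, and I would count these lattice paths so the totals agree degree by degree. This combinatorial matching, which shows that no further relations are needed in $\Pi$, is where I expect the real work to lie.
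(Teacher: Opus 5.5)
\textbf{Gap in Step 3.} The paper does not prove this statement itself; it quotes Iyama--Oppermann \cite{IO1}, whose argument rests on Iyama's explicit description of $A^n_m$, its $n$-cluster tilting module and the action of $\tau_n^{\pm}$. Your outline tries to reconstruct this. However, Step~3, which carries the whole content of the first isomorphism, does not work as written.

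\emph{The normal form is wrong, so the dimension count cannot match.} Bubble-sorting does show that every path is either zero or equal to its sorted form. But a sorted path that stays inside the simplex can still be zero, because the zero relation applies as soon as \emph{some} reordering leaves $Q_0^{(n,m)}$. For example, $\Pi^{(2,2)}$ is radical square zero of dimension $6$. The sorted path $(1,0,0)\xrightarrow{1}(0,1,0)\xrightarrow{2}(0,0,1)$ stays inside the simplex, yet it vanishes because $(1,0,0)+f_2\notin Q_0^{(2,2)}$. With the order $1<2<3$ you would count $9$ sorted paths, not $6$. The correct bound comes from moving all arrows of type $i$ to the front, as in the proof of Lemma~\ref{lem:m3-radical}. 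A path from $x$ with $a_i$ arrows of type $i$ is nonzero only if $a_i\le x_i$ for all $i$, so $\dim e_x\widehat{\Lambda}^{(n,m)}\le\prod_i(x_i+1)$. You would then need $\sum_k\dim\tau_n^{-k}(e_xA^n_m)\ge\prod_i(x_i+1)$. That requires an explicit formula for $\tau_n^{-k}$ on projectives, which you do not supply.

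\emph{The surjection $\widehat{\Lambda}^{(n,m)}\twoheadrightarrow\Pi^{(n,m)}$ is only asserted.} The isomorphism $E\simeq\tau_n^-\Lambda$ is one of right modules and says nothing about the left $\Lambda$-action on $E$. The type-$(n+1)$ arrows are only lifts of a basis of the top of $E$. So you must show three things: each mixed product vanishes exactly when $\widehat{\Lambda}^{(n,m)}$ predicts; otherwise it equals the swapped product up to a nonzero scalar; and all these scalars can be normalised to $1$ simultaneously by rescaling arrows.

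\textbf{Smaller points in Steps 1 and 2.} In Step~2, quadratic relations do not imply Koszulity. You need to prove that your complex indexed by subsets $I$ is exact; this is true here but requires an argument, since otherwise its degree-$n$ term need not compute $\Ext^n$.

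In Step~1, comparable pairs $x\le y$ (or interval modules) give too many summands. For $(n,m)=(2,2)$ the three vertices form a chain, but $M^2_2$ has only $\binom{4}{3}=4$ indecomposable summands, namely the projectives and injectives; the simple module at $(0,1,0)$ is excluded. So an additional condition is needed. Moreover, the claims about irreducible maps and vanishing composites are Iyama's theorem, which you should cite rather than sketch.

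\textbf{A cleaner route.} Every relation of $\widehat{\Lambda}^{(n,m)}$ contains at most one arrow of type $n+1$. Hence $\widehat{\Lambda}^{(n,m)}\simeq T_\Lambda(\widehat{\Lambda}_1)$, where $\Lambda=\widehat{\Lambda}^{(n,m)}/(C_0)$ and $\widehat{\Lambda}_1$ is spanned by paths with exactly one such arrow. It therefore suffices to construct a $\Lambda$-bimodule isomorphism $\widehat{\Lambda}_1\simeq\Ext^n_\Lambda(D\Lambda,\Lambda)$, and no global dimension count is needed.
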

	
	\begin{example} The quiver $Q^{(1,4)}$ is the following:
		\[
		\xymatrix{{ (3,0)} \ar@/^/[r]^{1} &{(2,1)} \ar@/^/[r]^{1}\ar@/^/[l]^{2} &{ (1,2)} \ar@/^/[r]^{1}\ar@/^/[l]^{2} &{ (0,3)}\ar@/^/[l]^{2}}
		\]
		The algebra $\widehat{\Lambda}^{(1,4)}$ is the classical preprojective algebra of type $A_4$, and moreover $A^1_4 \simeq \widehat{\Lambda}^{(1,4)}/(C_0)$.
	\end{example} 
	
	\begin{example}  \label{example} The quiver $Q^{(2,3)}$ is the following:
		$$\xymatrix@C=0.5cm@R0.5cm{
			&&020\ar[dr]^{2}\\
			&110\ar[dr]^{2}\ar[ur]^{1}&&011\ar[dr]^{2}\ar[ll]_{3}\\
			200\ar[ur]^{1}&&101\ar[ur]^{1}\ar[ll]_{3}&&002\ar[ll]_{3}
		}$$
		The algebra $\widehat{\Lambda}^{(2,3)}$ is the $3$-preprojective algebra of type $A_3$. Moreover, we have $A_3^2 \simeq \widehat{\Lambda}^{(2,3)}/(C_0)$.
	\end{example}
	
	\subsection{Algebras with radical square zero}
	In this subsection,  we collect some notions and properties of algebras with radical square zero. Denote
	by $J$  the Jacobson radical of an algebra. Throughout this subsection, we assume that $\Lambda$ is a basic \emph{radical square zero algebra}, that is, $J^{2}=0$.
	
	Associated with $\Lambda$ is the triangular matrix algebra	
	\begin{align*}
		\Sigma:=\begin{bmatrix}\Lambda/J&J\\[2pt]0&\Lambda/J\end{bmatrix},
	\end{align*}
	which will be used to study the representation theory of $\Lambda$. 
	A $\Sigma$-module is described by a \emph{triple} $(X^{\prime},X^{\prime\prime};\varphi)$,
	where $X^{\prime}$ and $X^{\prime\prime}$ are $(\Lambda/J)$-modules and
	$\varphi\colon X^{\prime}\otimes_{\Lambda/J}J\to X^{\prime\prime}$
	is a morphism in $\mod(\Lambda/J)$.
	A morphism
	$$f\colon(X^{\prime},X^{\prime\prime};\varphi)\to(Y^{\prime},Y^{\prime\prime};\psi)$$
	in $\mod\Sigma$ consists of a pair $(f',f'')=(f^{\prime}\colon X'\to Y', f''\colon X''\to Y'')$ of morphisms in
	$\mod(\Lambda/J)$ such that
		$f^{\prime\prime}\varphi=\psi\bigl(f^{\prime}\otimes_{\Lambda/J}J\bigr)$,
	that is, the diagram
	\begin{align*}
		\xymatrix{
			X^{\prime}\otimes_{\Lambda/J}J\ar[r]^-{\varphi}\ar[d]_{f'\otimes_{\Lambda/J} J}
			&X''\ar[d]^{f^{\prime\prime}}\\
			Y^{\prime}\otimes_{\Lambda/J}J\ar[r]^-{\psi}&Y^{\prime\prime}}
	\end{align*}
	commutes; see \cite[A.2.7]{ASS} and \cite[III.2]{ARS} for details.
	
	The structure of $\Sigma$ can be described by means of the
	separated quiver construction. Let $Q=(Q_{0},Q_{1})$ be a quiver. For each vertex $x\in Q_{0}$,  set
	$
	Q_{0}^{+}:=\{\,x^{+}\mid x\in Q_{0}\,\}
	$ and $
	Q_{0}^{-}:=\{\,x^{-}\mid x\in Q_{0}\,\}
	$.
		The \emph{separated quiver} of $Q$, denoted by
	$Q^{s}=(Q_{0}^{s},Q_{1}^{s})$, is defined by
	$Q_{0}^{s}:=Q_{0}^{+}\coprod Q_{0}^{-}$
	and
	\[
	Q_{1}^{s}:=
	\{\,x^{+}\longrightarrow y^{-}
	\mid x\longrightarrow y\text{ is an arrow of }Q\,\}.
	\]
		Thus, all arrows in $Q^{s}$ start at a vertex in $Q_{0}^{+}$ and end at
	a vertex in $Q_{0}^{-}$. In particular, every vertex of $Q^{s}$ is either a
	source or a sink. Notice also that the separated
	quiver $Q^s$ is not necessarily connected  even when  $Q$ is connected; see Example \ref{example:connected comp}.
	A full subquiver $Q'$ of $Q^{s}$ is called a \emph{single subquiver} if for any $x\in Q_{0}$, the vertex set $Q_{0}'$ contains at most one of $x^{+}$ or $x^{-}$.
	
	The following result explains the relationship between the triangular matrix
	algebra $\Sigma$ and the separated quiver of the  quiver for $\Lambda$.
	
	\begin{proposition}\cite[III.2.5]{ARS} \label{separated quiver algebra}
		Let $Q$ be the quiver of $\Lambda$. Then the algebra $\Sigma$ is isomorphic to the path algebra of $Q^{s}$. 	In particular, $\Sigma$ is a hereditary algebra with radical square zero.
	\end{proposition}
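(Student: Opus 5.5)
The claim is that $\Sigma$ is isomorphic to the path algebra ${\bf k}Q^{s}$, and hence hereditary with radical square zero. I would build an explicit algebra map from ${\bf k}Q^{s}$ to $\Sigma$ and compare dimensions.

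Since $\Lambda$ is basic over an algebraically closed field, $\Lambda/J\simeq\prod_{x\in Q_0}{\bf k}e_x$ is a product of copies of ${\bf k}$, one for each vertex. The bimodule $J=J/J^2$ then decomposes as
\[
J=\bigoplus_{x,y\in Q_0} e_xJe_y,
\]
where $\dim_{\bf k} e_xJe_y$ equals the number of arrows $x\to y$ in $Q$ (or $y\to x$, depending on the composition convention). Choose a basis $\{\bar\alpha\}$ of $J$ given by the images of the arrows $\alpha$ of $Q$.

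Next, define the algebra homomorphism $\Phi\colon {\bf k}Q^{s}\to\Sigma$ on generators:
\[
e_{x^+}\mapsto\begin{bmatrix}e_x&0\\0&0\end{bmatrix},\qquad e_{x^-}\mapsto\begin{bmatrix}0&0\\0&e_x\end{bmatrix},
\]
and send the arrow $x^+\to y^-$ coming from $\alpha\colon x\to y$ to $\begin{bmatrix}0&\bar\alpha\\0&0\end{bmatrix}$. These images are orthogonal idempotents summing to $1$, and each arrow image lies in the correct corner. Because $Q^{s}$ is bipartite, every path of length $\ge2$ in $Q^{s}$ is zero, and so ${\bf k}Q^{s}$ has no relations to verify. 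Hence the universal property of path algebras gives a well-defined map.

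Surjectivity holds because the image contains both diagonal copies of $\Lambda/J$ and the basis of the corner $J$. For the dimension count,
\[
\dim {\bf k}Q^{s}=2|Q_0|+|Q_1|=\dim\Sigma,
\]
so $\Phi$ is an isomorphism. Path algebras of finite acyclic quivers are hereditary, and $Q^{s}$ is acyclic since every vertex is a source or a sink. Finally, since there are no paths of length $2$, the radical of ${\bf k}Q^{s}$ is spanned by arrows and squares to zero.

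The only delicate point is the orientation convention: whether $e_xJe_y$ counts arrows $x\to y$ or $y\to x$, which depends on using right modules and on the path composition order. I would fix the convention so that arrows go from the $+$ copy to the $-$ copy as in the definition. The opposite choice gives $(Q^{s})^{\mathrm{op}}$, which relabelling $\pm$ identifies with $Q^{s}$ anyway.
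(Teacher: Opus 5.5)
Your proof is correct. With the paper's conventions (right modules, paths composed left to right, so an arrow $x\to y$ of $Q$ corresponds to a basis vector of $e_xJe_y$), your assignment sends each arrow $x^+\to y^-$ into $e_{x^+}\Sigma e_{y^-}$, which is exactly the corner $e_xJe_y$. Freeness of the path algebra makes $\Phi$ well defined, and surjectivity together with $\dim\mathbf{k}Q^{s}=2|Q_0|+|Q_1|=\dim\Sigma$ gives the isomorphism.

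The paper gives no argument of its own and defers to \cite[III.2.5]{ARS}. The standard argument behind that reference is structural. Since the diagonal entries $\Lambda/J$ are semisimple, the triangular matrix algebra $\Sigma$ is hereditary. Its radical is the off-diagonal corner, so $\rad^2\Sigma=0$, and its quiver, read off from the spaces $e_xJe_y$, is $Q^{s}$. One then uses that a basic hereditary algebra over an algebraically closed field is the path algebra of its quiver. Your route is more elementary: it replaces the heredity criterion and Gabriel's presentation theorem by an explicit surjection and a dimension count, and heredity comes for free afterwards. The structural route is more general, since it works for artin algebras and valued quivers, where no explicit path-algebra presentation is available.

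One sentence in your last paragraph is false, although your main argument does not use it. Relabelling $+\leftrightarrow-$ does not identify $(Q^{s})^{\mathrm{op}}$ with $Q^{s}$; it identifies it with $(Q^{\mathrm{op}})^{s}$. These differ in general. Take $\Lambda=\mathbf{k}Q$ with $Q$ a vertex $x$ having arrows to three distinct vertices $y_1,y_2,y_3$, which is radical square zero. The nontrivial component of $Q^{s}$ is then a $D_4$ with central source $x^+$, while that of $(Q^{s})^{\mathrm{op}}$ has a central sink. The two path algebras are not isomorphic: here $e_{x^+}\Sigma$ is $4$-dimensional, which matches $Q^{s}$ and not the opposite quiver.

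So the convention does matter for the isomorphism type of $\Sigma$. The correct point is that the quiver of $\Lambda$ must be computed with the same composition convention as the path algebra, so that $\Lambda\simeq\mathbf{k}Q/I$; then your assignment is forced and correct. For the paper's later use (Dynkin, Euclidean or wild checks and single subquivers), only the underlying graph of $Q^{s}$ matters, and that graph is the same for $Q^{s}$ and $(Q^{s})^{\mathrm{op}}$.
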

	
	We collect some results on the representation theory of algebras with radical square zero. We refer to \cite[Section X]{ARS} for more details.
	Define a functor $F\colon \mod \Lambda\to{} \mod\Sigma$ as follows.
	For any $\Lambda$-module $X$, let 
	\begin{align}
		F(X):=(X/XJ, XJ; \varphi_{X}),\notag
	\end{align} 
	where $\varphi_{X}\colon X/XJ\otimes_{\Lambda/J} J\to{} XJ$ is the morphism induced by the canonical multiplication map $X\otimes_{\Lambda} J\to{} XJ$ since $J^{2}=0$.
	For a morphism $g\colon X\to Y$ in $\mod\Lambda$, define 
	\begin{align}
		F(g):=(g^{\prime},g^{\prime\prime}), \notag
	\end{align} 
	where $g^{\prime}\colon X/XJ\to{} Y/YJ$ is induced by $g$ and $g^{\prime\prime}\colon XJ\to{} YJ$ is the restriction to $XJ$.
	\begin{proposition}{\cite[X.2.1, X.2.2, X.2.4, X.2.6]{ARS}}
		\label{2.6}
		The following hold:
		\begin{enumerate}
			\item The functor $F$ is full and induces an equivalence of categories 
			\[
			F\colon\underline{\mod}\,\Lambda\longrightarrow\underline{\mod}\,\Sigma
			\]			
			\item A $\Lambda$-module $X$ is indecomposable (resp. projective) if and only if $F(X)$ is an indecomposable (resp.  projective) $\Sigma$-module.
			\item  $\Lambda$ is representation-finite if and only if the separated quiver of the quiver of $\Lambda$ is a disjoint union of Dynkin quivers.
		\end{enumerate}
	\end{proposition}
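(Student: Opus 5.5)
The statement just above is Proposition~\ref{2.6}, cited from \cite[X.2.1, X.2.2, X.2.4, X.2.6]{ARS}. I would prove it by comparing $\Lambda$-modules with $\Sigma$-modules through their top and radical layers, using $J^2=0$ at every step.

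For (1), I first check that $F$ is a functor, then prove fullness. Let $(f',f'')\colon F(X)\to F(Y)$ be a morphism. Choose projective covers $P_X\to X$ and $P_Y\to Y$. Since $P_X/P_XJ\cong X/XJ$, the map $f'$ lifts to a map $h\colon P_X\to P_Y$. The compatibility $f''\varphi_X=\varphi_Y(f'\otimes J)$ says that, on $P_XJ\cong (X/XJ)\otimes_{\Lambda/J}J$, the composite $P_X\to P_Y\to Y$ agrees with $f''$ after passing through $X$. To see that $h$ descends to a map $X\to Y$, I compare kernels: $\ker(P_X\to X)\subseteq P_XJ$ is exactly the kernel of $\varphi_X$ viewed on $P_XJ$. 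Commutativity of the square then shows that this kernel is sent into $\ker(P_Y\to Y)$. So $h$ induces $g\colon X\to Y$ with $F(g)=(f',f'')$.

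Next I determine the kernel of $F$ on morphisms. We have $F(g)=0$ exactly when $g(X)\subseteq YJ$ and $g(XJ)=0$. Such a $g$ factors through $X/XJ$, which is semisimple, and so $g$ should factor through a projective. To see this, map $X/XJ\to YJ$ and lift through a projective cover of $Y$, using $J^2=0$ so that maps from semisimples into $YJ$ come from $\Lambda$-maps out of a projective. This identifies the kernel with the maps that vanish stably.

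To finish (1), I show $F$ is dense up to projective summands. Given a triple $(V,W;\psi)$, let $P$ be the projective cover of $V$, so $P/PJ=V$ and $PJ=V\otimes J$. Take $X=P/\ker\psi$. Then $F(X)=(V,\operatorname{im}\psi;\psi)$, which differs from the given triple only by a simple injective summand $(0,S)$. These summands are projective or injective objects of $\Sigma$ that get killed or matched in the stable categories. I expect this bookkeeping, which identifies which objects die stably on each side, to be the main obstacle.

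For (2), fullness of $F$ gives a surjection $\End(X)\to\End(F(X))$. By the kernel computation, the kernel of this surjection lies in the radical when $X$ has no simple projective summands. Hence $\End(X)$ is local if and only if $\End(F(X))$ is local, so $X$ is indecomposable exactly when $F(X)$ is. The projective case is read off directly: $F(P)=(P/PJ,PJ;\text{iso})$ is projective over $\Sigma$.

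For (3), parts (1) and (2) give a bijection between non-projective indecomposables of $\Lambda$ and indecomposables of $\Sigma$ up to finitely many exceptions. So $\Lambda$ is representation-finite if and only if $\Sigma\cong\mathbf{k}Q^s$ is, by Proposition~\ref{separated quiver algebra}. By Gabriel's theorem, $\mathbf{k}Q^s$ is representation-finite if and only if each connected component of $Q^s$ is Dynkin.
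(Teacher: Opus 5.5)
Your route is the standard argument from \cite{ARS}, which the paper cites without proof, and your fullness argument via projective covers is correct. The gap is in the part of (1) you defer as ``bookkeeping''.

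Your kernel computation shows only that $F(g)=0$ implies $g$ factors through $P_Y$. The claim that this ``identifies the kernel with the maps that vanish stably'' is false: for a nonzero projective $P$, $\mathrm{id}_P$ vanishes stably but $F(\mathrm{id}_P)\neq 0$. It is also not what faithfulness of the induced functor requires. You need the following: if $F(g)$ factors through a projective $\Sigma$-module, then $g$ factors through a projective $\Lambda$-module. The missing observation has two parts.
\begin{enumerate}
\item Every projective $\Sigma$-module has the form $F(P)\oplus(0,W;0)$ with $P$ projective.
\item $\Hom_\Sigma(F(X),(0,W;0))=0$. Such a morphism is $(0,u)$ with $u\varphi_X=0$, and $\varphi_X$ is always surjective.
\end{enumerate}
So any factorization of $F(g)$ through a projective passes through some $F(P)$. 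By fullness it equals $F(b)F(a)$ with $a\colon X\to P$ and $b\colon P\to Y$. Hence $F(g-ba)=0$, and your kernel computation shows that $g-ba$, and therefore $g$, factors through a projective.

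In the density step you misidentify the discarded summand. $(0,S;0)$ is a direct summand of $e_{22}\Sigma=(0,\Lambda/J;0)$, so it is projective; in general it is not injective. Being projective is exactly why it vanishes in $\underline{\mod}\,\Sigma$. An injective non-projective summand would survive there, so your ``killed or matched'' hedge does not close the argument, while the correct identification does.

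There are two smaller points in (2).
\begin{itemize}
\item You prove only that projectivity of $X$ implies projectivity of $F(X)$. For the converse, $(V,W;\psi)$ is projective iff $\psi$ is injective. Your fullness argument identifies $\ker(P_X\to X)$ with $\ker\varphi_X$, so then $P_X\cong X$.
\item The hypothesis ``no simple projective summands'' is superfluous. The kernel $I$ of $\End(X)\to\End(F(X))$ satisfies $I^2=0$, because $g_1g_2(X)\subseteq g_1(XJ)=0$. So $I\subseteq\rad\End(X)$ always.
\end{itemize}
With these repairs, (3) follows as you describe.
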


	\begin{lemma}\label{lem:wild-separated}
		Let $Q^s$ be the separated
		quiver of  of the quiver of $\Lambda$. If a connected component of
		$Q^s$ is of wild hereditary type, then $\Lambda$ is wild.
	\end{lemma}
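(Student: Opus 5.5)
We want to show that if some connected component $C$ of $Q^s$ is of wild hereditary type, then $\Lambda$ is wild. The plan is to transport wildness from the hereditary algebra ${\bf k}C$ to $\Lambda$ through the functor $F$ of Proposition~\ref{2.6}. Since $\Sigma\simeq {\bf k}Q^s$ by Proposition~\ref{separated quiver algebra}, and $Q^s$ is a disjoint union of its components, $\mod {\bf k}C$ is a direct factor of $\mod\Sigma$. Hence ${\bf k}C$ is a wild hereditary algebra and $\Sigma$ is wild. Concretely, there is a ${\bf k}\langle x,y\rangle$--${\bf k}C$-bimodule $M$, free of finite rank over ${\bf k}\langle x,y\rangle$, such that $-\otimes M$ preserves indecomposability and reflects isomorphism classes.

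The key steps are as follows.

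First, reduce to modules without projective summands. A wild family remains wild after discarding the finitely many indecomposable projective ${\bf k}C$-modules. Better, one can choose the family inside the regular component, so that its modules have no projective summands; in fact the modules $N\otimes M$ are then non-projective indecomposables.

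Second, lift the modules to $\Lambda$. By Proposition~\ref{2.6}(1),(2), every indecomposable non-projective $\Sigma$-module is of the form $F(X)$ for an indecomposable non-projective $\Lambda$-module $X$. Moreover, $F$ induces a bijection on isoclasses of indecomposable non-projectives, because the stable equivalence preserves and reflects isomorphism of objects without projective summands. So every ${\bf k}\langle x,y\rangle$-module $N$ produces an indecomposable $\Lambda$-module $X_N$, and non-isomorphic $N$ produce non-isomorphic $X_N$.

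Third, and this is the main obstacle, make this lifting functorial via a bimodule so that it fits the definition of wildness. I would do this explicitly. A $\Sigma$-module $(X',X'';\varphi)$ with $\varphi$ non-degenerate determines a $\Lambda$-module $X=X'\oplus X''$. Here $X''$ is the radical part, and $J$ acts through $\varphi\colon X'\otimes J\to X''$ with $J^2=0$; this is the quasi-inverse construction from \cite[X.2]{ARS}. The construction is given by a bimodule: take $M'$ to be the ${\bf k}\langle x,y\rangle$--$\Lambda$-bimodule obtained from $M$ by this recipe. It is again free of finite rank on the left, and $N\otimes M'\cong X_{N}$ with $F(N\otimes M')\cong N\otimes M$. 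We need $XJ=X''$, that is, no simple summand of $X''$ splits off. This holds because the image family consists of indecomposable non-simple modules.

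Combining these, $-\otimes_{{\bf k}\langle x,y\rangle}M'$ preserves indecomposability and reflects isomorphism, so $\Lambda$ is wild. Alternatively, one could cite the standard fact that wildness is preserved under stable equivalence, as in Krause's theorem on stable equivalence and representation type. That fact also yields the lemma directly from Proposition~\ref{2.6}(1).
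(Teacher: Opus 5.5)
Your proposal is correct. Its closing sentence is exactly the paper's proof. The paper notes $\Sigma\simeq{\bf k}C\times\Sigma'$, so $\Sigma$ is wild, and then applies Krause's theorem (stable equivalence preserves wildness) to Proposition~\ref{2.6}(1).

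Your main line instead transports the wild bimodule by hand, and this can be made fully transparent. Since $\Lambda\cong{\bf k}Q/R^2$ is basic over an algebraically closed field, $\Lambda=\Lambda_0\oplus J$ with $\Lambda_0$ spanned by the vertex idempotents. The map $\lambda_0+j\mapsto\left[\begin{smallmatrix}\lambda_0&j\\0&\lambda_0\end{smallmatrix}\right]$ is then an algebra homomorphism $\Lambda\to\Sigma$. Your construction $X=X'\oplus X''$ is restriction of scalars $G$ along this map, so $M'$ is simply $M$ viewed as a right $\Lambda$-module, still free of finite rank on the left.

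If $\varphi$ is surjective, then $G(V)J=X''$ and $FG(V)\cong V$. Since $F$ is additive and $F(X)\neq0$ for $X\neq0$ (Nakayama), this alone shows that $-\otimes M'$ preserves indecomposability and reflects isomorphism classes. So your Steps 1--2, which go through the stable equivalence and the bijection on non-projective indecomposables, are not actually needed.

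The only indecomposable $\Sigma$-modules with non-surjective $\varphi$ are the simple projectives $(0,S;0)$: otherwise a complement of $\operatorname{im}\varphi$ in $X''$ splits off. You can exclude these more cleanly than by passing to regular components; note that ``discard finitely many projectives'' is not by itself a bimodule construction. The rank of $M$ over ${\bf k}\langle x,y\rangle$ must be at least $2$. Otherwise the one-dimensional ${\bf k}\langle x,y\rangle$-modules would give infinitely many pairwise non-isomorphic one-dimensional ${\bf k}C$-modules. Hence no $N\otimes M$ with $N$ indecomposable is simple.

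In short, the paper's route is a two-line citation resting on a nontrivial general theorem about arbitrary stable equivalences. Yours is elementary and self-contained: it uses only the splitting of $\Lambda$ and the identity $FG(V)\cong V$, and it does not need the stable-equivalence statement of Proposition~\ref{2.6}(1) at all.
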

	
	\begin{proof}		
		Let $C$ be a connected component of $Q^s$ which is of wild hereditary
		type. Since $Q^s$ is a disjoint union of its connected components, we
		have
		\[
		\Sigma\simeq {\bf k}C\times\Sigma',
		\]
		for some finite dimensional algebra $\Sigma'$. In particular, ${\bf k}C$ is
		a quotient algebra of $\Sigma$. The functor
		$$
		\operatorname{mod} {\bf k}C\to \operatorname{mod}\Sigma
		$$
		is exact and fully faithful. Hence the wildness of ${\bf k}C$ implies that
		$\Sigma$ is wild.
		
		On the other hand, Proposition~\ref{2.6}(1) gives a stable equivalence
		\[
		\underline{\mod}\,\Lambda
		\simeq
		\underline{\mod}\,\Sigma.
		\]
		By \cite[Corollary 3.4]{Kr}, stable equivalence preserves wildness. Therefore
		$\Lambda$ is wild if and only if $\Sigma$ is wild. Since $\Sigma$ is
		wild, $\Lambda$ is wild.
	\end{proof}
	
	\subsection{Quivers with potential and $3$-preprojective algebras} 
	
	Throughout this subsection, we
	assume that $\Lambda$ is an algebra with ${\rm gl.dim}\,\Lambda\le 2$. For a quiver $Q$ and an arrow $a\in Q_1$, we denote by $s(a)\in Q_0$ its \emph{source} and $t(a)\in Q_0$ its \emph{target}, respectively. For a path $p=a_1a_2\dots a_l$ in $Q$, we also denote by $s(p)=s(a_1)$ and $t(p)=t(a_l)$ starting vertex and ending vertex, respectively.

	\begin{definition} \label{quiver with potential} (\cite{KV}; see \cite[Definition 2.1]{HI} for the completed version)
 For a presentation $\Lambda = {\bf k}Q/ {\langle r_1, \ldots, r_l \rangle}$ by a quiver $Q$ and a minimal set $\{r_1, \ldots, r_l\}$ of relations in $ {\bf k}Q$, 
		 define the \emph{quiver with potential} $(Q_\Lambda,W_\Lambda)$ by
			\begin{itemize}
				\item[$\bullet$] $Q_{\Lambda,0} = Q_0$,
				\item[$\bullet$] $Q_{\Lambda,1} = Q_1 \coprod C_\Lambda$ with $C_\Lambda:=\{\rho_i\colon t(r_i)\to s(r_i)\ |\ 1\le i\le l\}$,
				\item[$\bullet$] $W_\Lambda = \sum_{i=1}^l r_i\rho_i$.
			\end{itemize}			
	\end{definition}
			For a  cyclic path $p$ in $Q_\Lambda$ and $a\in Q_{\Lambda,1}$, let 
		$$\partial_a(p)=\sum_{p=uav}vu.$$	
		The \emph{Jacobian algebra} is defined by $\mathcal{P}(Q_\Lambda,W_\Lambda)={\bf k}Q_\Lambda/ \langle \partial_a W_\Lambda \;|\; a \in Q_{\Lambda,1} \rangle$.

	The following result gives the relationship between $(Q_\Lambda,W_\Lambda)$ and ${\Pi}_3(\Lambda)$.
	\begin{proposition}
		\cite[Proposition~2.2]{HI}, \cite[Theorem 6.10]{KV} The ${\bf k}$-algebra  ${\Pi}_3(\Lambda)$ and $\mathcal{P}(Q_\Lambda,W_\Lambda)$ 
		are isomorphic.
	\end{proposition}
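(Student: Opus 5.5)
The statement to prove is that $\Pi_3(\Lambda)\simeq\mathcal{P}(Q_\Lambda,W_\Lambda)$ whenever $\operatorname{gl.dim}\Lambda\le 2$ and the relations $r_1,\dots,r_l$ form a minimal set.

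I will compare the two sides as tensor algebras over $\Lambda$. Both algebras are positively graded if each new arrow $\rho_i$ is given degree $1$ and each arrow of $Q$ degree $0$. Write $\mathcal P=\mathcal{P}(Q_\Lambda,W_\Lambda)$ and $\Pi=\Pi_3(\Lambda)$. I will prove the following three facts in turn.
\begin{enumerate}
\item The degree $0$ part of $\mathcal P$ is $\Lambda$.
\item The degree $1$ part of $\mathcal P$ is isomorphic to $E:=\Ext^2_\Lambda(D\Lambda,\Lambda)$ as a $\Lambda$-bimodule.
\item $\mathcal P$ is generated over its degree $0$ part by its degree $1$ part, with no further relations, so that $\mathcal P\simeq T_\Lambda E=\Pi$.
\end{enumerate}

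For fact (1), look at the cyclic derivatives. Since $W_\Lambda=\sum_i r_i\rho_i$, we have $\partial_{\rho_i}W_\Lambda=r_i$. For an arrow $a\in Q_1$, the derivative $\partial_a W_\Lambda$ is a sum of paths each containing exactly one $\rho_j$, so it is homogeneous of degree $1$. Hence the degree $0$ part of $\mathcal P$ is ${\bf k}Q/\langle r_i\rangle=\Lambda$.

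For fact (2), I use the minimal projective bimodule resolution of $\Lambda$ for $\operatorname{gl.dim}\Lambda\le2$:
\[
0\to P_2\to \bigoplus_{i}\Lambda e_{s(r_i)}\otimes e_{t(r_i)}\Lambda\to \bigoplus_{a\in Q_1}\Lambda e_{s(a)}\otimes e_{t(a)}\Lambda\to \bigoplus_x \Lambda e_x\otimes e_x\Lambda\to\Lambda\to 0 .
\]
Here the middle terms are indexed by the minimal relations, which is where minimality of $\{r_i\}$ enters. Global dimension $\le2$ forces $P_2=0$, so the complex is exact at the relation term. Applying $\Hom_{\Lambda^e}(-,\Lambda^e)$ and using $\Ext^2_\Lambda(D\Lambda,\Lambda)\simeq\Ext^2_{\Lambda^e}(\Lambda,\Lambda^e)$, I get
\[
E\simeq \operatorname{Coker}\Bigl(\bigoplus_{a}\Lambda e_{t(a)}\otimes e_{s(a)}\Lambda\to\bigoplus_i \Lambda e_{t(r_i)}\otimes e_{s(r_i)}\Lambda\Bigr).
\]
The generators of the target are precisely the symbols $\rho_i:t(r_i)\to s(r_i)$. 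The image of the generator indexed by $a$ is the "derivative of the relations with respect to $a$", namely $\sum_i \partial_a(r_i\rho_i)$ read in the bimodule. This is exactly $\partial_aW_\Lambda$ in degree $1$. So $E$ is the degree $1$ part of $\mathcal P$.

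For fact (3), observe that every relation $\partial_a W_\Lambda$ is linear in the $\rho$'s. Therefore $\mathcal P$ is the tensor algebra over $\Lambda$ of the degree $1$ bimodule $\bigoplus_i\Lambda\rho_i\Lambda/(\partial_aW_\Lambda)$, which is $E$ by fact (2). Hence $\mathcal P\simeq T_\Lambda E=\Pi_3(\Lambda)$.

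The main obstacle is the identification in fact (2): checking that the dual of the bimodule differential on the relation generators agrees with the cyclic derivative $\partial_a$ up to sign conventions. I would do this by writing each $r_i=\sum u\,a\,v$ and computing the differential explicitly as $r_i\mapsto\sum u\otimes a\otimes v$. In the intended application there is a further concern about convergence, since in general one would need completed path algebras. Here $\Pi$ is finite dimensional, so the non-completed version suffices. This is why the uncompleted form of \cite{KV} is cited.
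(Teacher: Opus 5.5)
Your proof is correct, but it takes a different route from the paper. The paper gives no argument of its own; it cites \cite[Proposition~2.2]{HI} and \cite[Theorem~6.10]{KV}, where the result is proved at the dg level. There, the inverse dualizing complex $\RHom_{\Lambda^e}(\Lambda,\Lambda^e)$ is computed from the same minimal bimodule resolution, the derived $3$-preprojective algebra is shown to be quasi-isomorphic to the Ginzburg dg algebra of $(Q_\Lambda,W_\Lambda)$, and the statement follows by taking $H^0$ on both sides.

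Your argument is the $H^0$-level version of that computation. Only the cokernel of the last dualized differential survives in degree $0$. You identify it directly with the degree-one part of $\mathcal{P}(Q_\Lambda,W_\Lambda)$ by checking that the dual generator attached to an arrow $a$ maps to $\partial_aW_\Lambda$. Homogeneity of the defining relations (degrees $0$ and $1$) then gives $\mathcal{P}(Q_\Lambda,W_\Lambda)\simeq T_\Lambda E$.

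\emph{What each route buys.} Yours is elementary and self-contained. It proves the uncompleted statement for every finite-dimensional $\Lambda$ with $\operatorname{gl.dim}\Lambda\le 2$, without assuming $\Pi_3(\Lambda)$ is finite dimensional. So your closing remark about convergence is unnecessary; finite dimensionality only matters when comparing with the completed Jacobian algebra used in \cite{HI}. The cited route gives more: a quasi-isomorphism of dg algebras carrying the $3$-Calabi--Yau structure, which underlies the cluster-category connections. The Jacobian-algebra isomorphism is then a corollary.

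\emph{Small repairs.}
\begin{enumerate}
\item The term you call $P_2$ sits in homological degree $3$. It vanishes because the $k$-th term of the minimal bimodule resolution is $\bigoplus_{x,y}(\Lambda e_x\otimes e_y\Lambda)^{\dim\Ext^k_\Lambda(S_x,S_y)}$, so $\operatorname{gl.dim}\Lambda\le 2$ kills it.
\item The isomorphism $\Ext^2_\Lambda(D\Lambda,\Lambda)\simeq\Ext^2_{\Lambda^e}(\Lambda,\Lambda^e)$ must be one of $\Lambda$-bimodules, since the tensor algebra depends on the bimodule structure. This holds for $\Lambda$ of finite global dimension.
\item Possible signs in the dualized differential are harmless, since replacing $\partial_aW_\Lambda$ by $-\partial_aW_\Lambda$ does not change the ideal.
\end{enumerate}
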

	
	\begin{example}
		Let $\Lambda=A_3^2$ be the Auslander algebra of ${\bf k}A_3$. Then $\Lambda={\bf k} Q/I$, where $Q$ is depicted below and the ideal $I=(ac,bd-ce,ef)$.
		$$Q:\quad\xymatrix@C=0.6cm@R0.6cm{
			&&\bullet \ar[dr]^{d}\\
			&\bullet\ar[dr]^{c}\ar[ur]^{b}&&\bullet\ar[dr]^{f}\\
			\bullet\ar[ur]^{a}&&\bullet\ar[ur]^{e}&&\bullet
		}\quad \quad Q_\Lambda:\quad
		\xymatrix@C=0.6cm@R0.6cm{
		&&\bullet \ar[dr]^{d}\\
		&\bullet\ar[dr]^{c}\ar[ur]^{b}&&\bullet\ar[dr]^{f}\ar[ll]_{h}\\
		\bullet\ar[ur]^{a}&&\bullet\ar[ur]^{e}\ar[ll]_{g}&&\bullet\ar[ll]_{i}
		}
		$$
		By Definition \ref{quiver with potential}, $Q_{\Lambda}$ is depicted above, and $W_\Lambda=acg+bdh-ceh+efi$. Thus ${\Pi}_3(\Lambda)={\bf k}Q_{\Lambda}/(cg,dh,ga-eh,hb,fi-hc,ie,ac,bd-ce,ef)$.
	\end{example}
	
	\section{Proof of Theorem \ref{main thm}}
		In this section, we give the finite--tame--wild trichotomy of $\Pi^{(n,m)}$. For $n=1$, the algebra $\Pi^{(1,m)}$ coincides with the classical preprojective algebra of type $A_m$.  It is representation-finite precisely when
		$m\leq4$; in particular, $\Pi^{(1,1)},\dots, \Pi^{(1,4)}$ have $1,4,12$ and $40$ indecomposable modules, respectively.  The algebra
		$\Pi^{(1,5)}$ is tame, whereas $\Pi^{(1,m)}$ is wild for $m\geq6$; see
		\cite[Proposition~3.3 and Section 9.6]{GLS} and the references therein. 
		Therefore, it remains to consider $n\ge 2$ and $m\ge 1$.

		\subsection{The representation-finite cases}
				
		\begin{proposition} \label{representation-finite cases}
			For any $n\ge2$ and $m\le 3$,  ${\Pi}^{(n,m)}$ is representation-finite. Moreover, we have 
			\[
			|\Ind {\Pi}^{(n,m)}|=
			\begin{cases}
				1, & m=1, \\[3pt]
				2(n+1), & m=2, \\[3pt]
				\dfrac{(n+1)(n+2)(n+3)}{2}, & m=3, \\[8pt]
			\end{cases}
			\]
			where $|\Ind {\Pi}^{(n,m)}|$ denotes the number of isomorphism classes of indecomposable ${\Pi}^{(n,m)}$-modules.
		\end{proposition}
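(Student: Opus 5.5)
The plan is to treat $m=1,2$ directly from the presentation $\Pi^{(n,m)}\simeq\widehat{\Lambda}^{(n,m)}$ of Theorem~\ref{Preprojective Auslander cut}. For $m=3$ I will count indecomposables through the triangle equivalence \eqref{eq:dynkin-orbit}, as stable indecomposables plus indecomposable projectives.

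For $m=1$, $Q_0^{(n,1)}=\{0\}$ is a single vertex with no arrows, so $\Pi^{(n,1)}\simeq\mathbf{k}$ and there is exactly one indecomposable. For $m=2$, the vertices are $e_1,\dots,e_{n+1}$, and the only arrows are $i\colon e_i\to e_{i+1}$ (indices mod $n+1$), giving an oriented cycle. Every path of length two is $e_i\to e_{i+1}\to e_{i+2}$, i.e.\ $j=i+1$. Since $e_i+f_{i+1}$ has a negative coordinate, Definition~\ref{def.qns}(2) forces this path to vanish. Hence $\Pi^{(n,2)}$ is the self-injective Nakayama algebra on a cyclic quiver with $n+1$ vertices and radical square zero. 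Alternatively, its separated quiver is a disjoint union of $n+1$ copies of $A_2$, which is Dynkin, so Proposition~\ref{2.6}(3) applies. Its indecomposables are the $n+1$ simples and the $n+1$ projective-injectives, giving $2(n+1)$.

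For $m=3$, I use \eqref{eq:dynkin-orbit}. The key input is the identification of the stable $n$-Auslander algebra $\Gamma=\underline{\End}_{A_3^n}(\Pi)$. I expect $\Gamma\simeq A_2^{n+1}$, by the general pattern $\Gamma\simeq A_{m-1}^{n+1}$ from \cite{IO1,IO2}. For $n=1$ this is the fact that removing the projectives from the Auslander--Reiten quiver of $\mathbf{k}A_m$ leaves that of $\mathbf{k}A_{m-1}$. In general it should follow by comparing the quiver $Q^{(n,m)}$ with the vertices of $Q^{(n,m)}$ that are not projective $A_m^n$-modules. This identification is the step I expect to be the main obstacle. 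I would first try to extract it from the cone/slice description in \cite[Section~5]{IO1}, or else verify it directly for $m=3$ by hand.

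Next, $A_2^{n+1}$ has $\binom{n+2}{n+1}=n+2$ vertices. It is the linearly oriented $A_{n+2}$ with all paths of length two zero, which is derived equivalent to $\mathbf{k}A_{h}$ with $h=n+2$. Thus $\underline{\mod}\,\Pi^{(3)}\simeq \DDD^{\bo}(\mathbf{k}A_h)/F$ with $F=\tau^{-1}[n]$. Since the orbit category is triangulated, its indecomposables are the $F$-orbits of indecomposables of $\DDD^{\bo}(\mathbf{k}A_h)$, i.e.\ of vertices of $\mathbb{Z}A_h$.

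Now use $[2]=\tau^{-(h+1)}$ in $\DDD^{\bo}(\mathbf{k}A_h)$. This gives $F^2=\tau^{-(2+n(h+1))}=\tau^{-(2+n(n+3))}$, so $\mathbb{Z}A_h/\langle F^2\rangle$ has $h(2+n(n+3))$ vertices. Because $F$ acts freely there with $F^2$ acting trivially, the number of $F$-orbits is
\[
\frac{(n+2)(n^2+3n+2)}{2}=\frac{(n+1)(n+2)^2}{2}.
\]
Finally I add the $\binom{n+2}{2}=\frac{(n+1)(n+2)}{2}$ indecomposable projective $\Pi^{(n,3)}$-modules, one per vertex of $Q^{(n,3)}$. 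The total is $\frac{(n+1)(n+2)(n+3)}{2}$, which is finite, so $\Pi^{(n,3)}$ is representation-finite. As a sanity check, for $n=1$ this formula gives $12$, matching the known count for $\Pi^{(1,3)}$.
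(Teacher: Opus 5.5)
Your proposal is correct and follows the paper's route: $m=1,2$ are read off directly from the quiver presentation, and for $m=3$ the paper likewise passes to $\underline{\mod}\,\Pi^{(n,3)}\simeq \DDD^{\bo}(\mathbf{k}A_{n+2})/\tau^{-1}[n]$ and then adds the $\binom{n+2}{2}$ indecomposable projectives. The identification you flagged as the main obstacle is exactly what the paper cites from \cite[Theorem~5.47]{IO1}, namely $\underline{\mod}\,\Pi^{(n,3)}\simeq\mathcal{C}^{n+1}_{A_2^{n+1}}$; your orbit count via $F^2=\tau^{-(n+1)(n+2)}$ acting freely replaces the paper's fundamental-domain count ($n$ copies of $\Ind\mathbf{k}A_{n+2}$ plus one copy of the projectives) and gives the same $\frac{(n+1)(n+2)^2}{2}$.
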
	
		\begin{proof} We divide the proof into the three cases $m=1,2,3$.
			
			\emph{Case $1$}: For $m=1$, the quiver $Q^{(n,1)}$ has only one vertex and no arrows, so we have
			${\Pi}^{(n,1)}\simeq {\bf k}$.
			
			\emph{Case $2$}: For $m=2$, the algebra ${\Pi}^{(n,2)}$ is presented by the quiver 
			$$e_1 \xlongrightarrow{1} e_2 \xlongrightarrow{2} \dots \xlongrightarrow{n} e_{n+1}\xlongrightarrow{n+1} e_1$$ modulo the ideal generated by all paths of length two. Hence ${\Pi}^{(n,2)}$ is the
			 cyclic Nakayama algebra with radical square zero. Its indecomposable modules consist of $n+1$ simple modules $S_i$ and $n+1$ indecomposable projective modules $P_i$ for $1\leq i\leq n+1$. Its Auslander--Reiten quiver is depicted below.
			\begin{figure}[H]
			$$\xymatrix@C=0.5cm@R0.5cm{
				&P_{n+1}\ar[dr]&&P_n\ar[dr]&& \dots\ar[dr] &&P_{1}\ar[dr]\\
				S_1\ar[ur]\ar@{--}[rr]&&S_{n+1}\ar[ur]\ar@{--}[rr]&&\dots\ar[ur]\ar@{--}[rr]&&S_{2}\ar[ur]\ar@{--}[rr]&& S_1
				}$$
			\caption{The Auslander--Reiten quiver of ${\Pi}^{(n,2)}$}	\label{AR quiver for (n,2)}
			\end{figure}\noindent 
		 Hence we have
			\begin{equation*}
				|\Ind{\Pi}^{(n,2)}|=2(n+1).
			\end{equation*}
						
			\emph{Case $3$}: For $m=3$,  we note that 
			\begin{equation*}
				\underline{\mod}\,\Pi^{(n,3)}
				\simeq
				\mathcal C_{A_{2}^{n+1}}^{n+1}.
				\label{eq:stable-equivalence}
			\end{equation*}
			by \cite[Theorem 5.47]{IO1}. Here,
			the algebra $A_2^{n+1}$ is the linear Nakayama algebra with radical square zero, which is presented by the quiver 									
			$$1 \longrightarrow 2 \longrightarrow \dots \longrightarrow {n+1}\longrightarrow n+2$$ 
			modulo the ideal generated by all paths of length two. Note that $A_2^{n+1}$ is derived equivalent to a hereditary algebra of Dynkin type $A_{n+2}$.  By (\ref{eq:dynkin-orbit}), 
			\begin{equation}\label{stable category of Pi^{(n,3)}}
				\underline{\mod}\,\Pi^{(n,3)}
				\simeq
				\DDD^{\bo}({\bf k}A_{n+2})/\tau^{-1}[n].				
			\end{equation}
		 A fundamental domain of the orbit category $\DDD^{\bo}({\bf k}A_{n+2})/\tau^{-1}[n]$ 
		 consists of $n$ copies of the set of indecomposable ${\bf k}A_{n+2}$-modules together with one copy of the indecomposable projective modules.
		Since $$|\Ind ({\bf k}A_{n+2})|=\frac{(n+2)(n+3)}2,$$ the number of stable indecomposable modules is
		\begin{align}
			\left|\Ind\underline{\mod}\,\Pi^{(n,3)}\right|
			&=n\frac{(n+2)(n+3)}2+(n+2)\notag\\
			&=\frac{(n+1)(n+2)^2}{2}.
			\label{eq:m3-stable-count}
		\end{align}
		There are
		$$
		\binom{n+2}{n}=\binom{n+2}{2}
		$$
		indecomposable projective $\Pi^{(n,3)}$-modules.  Adding them to
		\eqref{eq:m3-stable-count} gives
		\begin{equation*}
			|\Ind\Pi^{(n,3)}|
			=\frac{(n+1)(n+2)(n+3)}2.
			\label{eq:m3-total-count}
		\end{equation*} This completes the proof.
			\end{proof}
			

		\subsection{The tame and wild cases}
			Let
		\[
		B=B^{(n,m)}:=\Pi^{(n,m)}/\rad^2\Pi^{(n,m)}.
		\]
		
		\begin{proposition} \label{wild case} The algebra ${\Pi}^{(n,m)}$ is wild in each of the following cases:
			\begin{itemize}
				\item [(1)] $n=2$ and $m\ge 5$;
				\item [(2)] $n\ge 3$ and $m\ge 4$. 			
			\end{itemize}
		\end{proposition}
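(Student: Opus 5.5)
The plan is to pass to the radical square zero quotient $B=B^{(n,m)}=\Pi^{(n,m)}/\rad^2\Pi^{(n,m)}$. Since $B$ is a quotient of $\Pi^{(n,m)}$, restriction of scalars embeds $\mod B$ into $\mod\Pi^{(n,m)}$ as an exact, fully faithful functor, so wildness of $B$ forces wildness of $\Pi^{(n,m)}$. The quiver of $B$ is $Q^{(n,m)}$, because $\Pi^{(n,m)}\simeq\widehat{\Lambda}^{(n,m)}$ by Theorem~\ref{Preprojective Auslander cut} and the relations lie in $\rad^2$. By Lemma~\ref{lem:wild-separated}, it is therefore enough to find one connected component of the separated quiver $(Q^{(n,m)})^s$ that is of wild hereditary type. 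For this I will use the standard fact that a connected quiver is wild as soon as it contains a full connected subquiver whose underlying graph is neither Dynkin nor Euclidean. Examples of such subgraphs are a vertex of degree $\ge 5$, a $\widetilde D_4$-star with one arm lengthened, or a cycle with a pendant vertex attached.

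\emph{Reduction in $m$.} The map $x\mapsto x+e_1$ sends $Q_0^{(n,m)}$ into $Q_0^{(n,m+1)}$ and commutes with adding any $f_i$. Whether an arrow $x\to x+f_i$ exists depends only on both endpoints lying in the vertex set, so this embedding identifies $Q^{(n,m)}$ with a full subquiver of $Q^{(n,m+1)}$. The same then holds for the separated quivers. Consequently, a wild full subgraph of a component of $(Q^{(n,m)})^s$ sits inside a component of $(Q^{(n,m+1)})^s$, which is then wild. This reduces case (1) to $(n,m)=(2,5)$ and case (2) to $m=4$ with $n\ge3$ arbitrary. There is no equally simple embedding in the $n$-direction, because the arrow type $f_{n+1}$ changes with $n$, so the $m=4$ family must be treated uniformly in $n$.

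\emph{Explicit subgraphs.} In $(Q^{(n,m)})^s$, a source $x^+$ has an arrow to $(x+f_i)^-$ for each $i$ with $x_i>0$. Dually, a sink $y^-$ receives an arrow from $(y-f_i)^+$ for each $i$ with $y_{i+1}>0$, indices taken mod $n+1$. For $m=4$ and $n\ge3$, I would start from $x=(1,1,1,0,\dots,0)$, so that $x^+$ has three neighbours $(x+f_1)^-,(x+f_2)^-,(x+f_3)^-$. I would then follow the neighbours of these sinks. Because $n\ge3$, the coordinates $e_4,\dots,e_{n+1}$ give room for a fourth branch and an extended arm. The aim is a full subgraph of type $\widetilde D_4$ plus one vertex, or a star with an extra arm, written in coordinates that depend only on the first four or five entries, so that it works for all $n\ge3$ at once. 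For $(2,5)$, where the vertex sum is $4$, I would similarly start at an interior vertex such as $(2,1,1)^+$ or $(1,2,1)^+$, which has three out-neighbours. I would expand two steps and exhibit either a tree that is not Dynkin or Euclidean, or a cycle with an extra pendant vertex.

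The main obstacle is this explicit combinatorics: choosing the vertices so that the subgraph is \emph{full} in $(Q^{(n,m)})^s$, with no unexpected extra arrows among the chosen vertices, and checking that it is genuinely neither Dynkin nor Euclidean. The bound $m\ge4$ is sharp for $n\ge3$, and $(2,4)$ is tame, so the subgraph for $m=4$ must use $n\ge3$ essentially. I would first compute the full component containing $x^+$ in $(Q^{(3,4)})^s$ by hand and read off a minimal wild subgraph. I would then check that the same coordinates, padded by zeros, still give a full subgraph for larger $n$.
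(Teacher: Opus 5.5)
\textbf{Verdict: genuine gap.} Your reductions match the paper's. You pass to $B=\Pi^{(n,m)}/\rad^2\Pi^{(n,m)}$, apply Lemma~\ref{lem:wild-separated}, and shift by $e_1$, which is the paper's $s=(m-5)e_1$ and $s=(m-4)e_1$. Your check that $x\mapsto x+e_1$ gives a full embedding of quivers is correct. The gap is that the only substantive step is never carried out: you do not exhibit a full subquiver of $Q_B^s$ whose underlying graph is neither Dynkin nor Euclidean, nor check that it is full. You flag this as the main obstacle and stop, so as written nothing is proved.

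Moreover, some of your target shapes cannot occur. In $Q_B^s$, both $x^+$ and $x^-$ have degree equal to the number of nonzero coordinates of $x$. At your base cases $(2,5)$ and $m=4$ this is at most $3$. So there is no vertex of degree $\ge 5$, no $\widetilde D_4$-star, and no \emph{fourth branch}. The witness must be a tree with two branch points or long arms, or an even cycle with a pendant vertex. The paper uses $\widetilde A_5$ plus a leaf.

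Your own starting vertex does work, so the gap can be filled as follows.
\begin{itemize}
\item \emph{Case $n\ge 3$, $m=4$.} Take the sources $e_1+e_2+e_3$, $3e_2$, $2e_3+e_{n+1}$, $2e_1+e_4$, $e_2+e_4+e_{n+1}$ and the sinks $2e_2+e_3$, $e_1+2e_3$, $e_1+e_2+e_4$. They span a full subquiver whose only edges are:
  \begin{itemize}
  \item $(e_1+e_2+e_3)^+$ to each of the three sinks;
  \item $(3e_2)^+\to(2e_2+e_3)^-$;
  \item $(2e_3+e_{n+1})^+\to(e_1+2e_3)^-$;
  \item $(2e_1+e_4)^+\to(e_1+e_2+e_4)^-$ and $(e_2+e_4+e_{n+1})^+\to(e_1+e_2+e_4)^-$.
  \end{itemize}
  Every other arrow out of these sources ends at a vertex outside the set, such as $e_3+e_4+e_{n+1}$, $e_2+e_5+e_{n+1}$, $2e_1+e_5$ or $3e_1$. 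This also holds for $n=3$, where $e_4=e_{n+1}$. The result is a tree with two adjacent trivalent vertices that properly contains $\widetilde E_6$, hence is wild.
\item \emph{Case $(2,5)$.} Centre at $(1,2,1)^+$ and add $(0,3,1)^-$, $(0,4,0)^+$, $(2,2,0)^-$, $(3,1,0)^+$, $(1,1,2)^-$, $(0,1,3)^+$, $(2,0,2)^+$. This gives the same tree shape.
\end{itemize}
Dropping $(0,4,0)^+$ and $(0,1,3)^+$ and adding $(3,0,1)^-$ instead gives exactly the paper's witness: the $6$-cycle through $(2,0,2)^+,(3,0,1)^-,(3,1,0)^+,(2,2,0)^-,(1,2,1)^+,(1,1,2)^-$ with pendant $(0,3,1)^-$. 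Either witness, with its fullness check written out, completes your argument.
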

		\begin{proof} 		
			Since $B$ is a quotient algebra of $\Pi^{(n,m)}$, there exists a natural embedding $\mod B \to \mod \Pi^{(n,m)}$;
			hence it suffices to show that $B$ is wild.
			
			 Assume first that $n=2$ and $m\geq5$. Set $s=(m-5)e_1$.  In the separated
			quiver $Q^s_B$  of the quiver of $B$, the following six distinct vertices span a full subquiver whose underlying graph is the
			Euclidean diagram $\widetilde A_5$:
			\begin{figure}[H]
				\begin{tikzpicture}[scale=1.2, 
					every node/.style={font=\small},
					vtx/.style={circle, draw, minimum size=5pt, inner sep=0pt, fill=white},
					arr/.style={->, >=Stealth, thick, shorten >=1.5pt, shorten <=1.5pt}]
					
					\node[vtx, label=90:{$((2,0,2)+s)^+$}]   (v1) at (90:1.3)  {};
					\node[vtx, label=30:{$((3,0,1)+s)^-$}]    (v2) at (30:1.3)  {};
					\node[vtx, label=-30:{$((3,1,0)+s)^+$}]   (v3) at (-30:1.3) {};
					\node[vtx, label=-90:{$((2,2,0)+s)^-$}]   (v4) at (-90:1.3) {};
					\node[vtx, label=-150:{$((1,2,1)+s)^+$}]  (v5) at (-150:1.3){};
					\node[vtx, label=150:{$((1,1,2)+s)^-$}]   (v6) at (150:1.3) {};
					
					\draw[arr] (v1) -- node[above] {$3$} (v2);
					\draw[arr] (v3) -- node[ right] {$2$} (v2);
					\draw[arr] (v3) -- node[below] {$1$}(v4);
					\draw[arr] (v5) --node[below] {$3$}  (v4);
					\draw[arr] (v5) -- node[left] {$2$} (v6);
					\draw[arr] (v1) -- node[above] {$1$} (v6);
				\end{tikzpicture}
				\caption{A full subquiver of $Q_B^s$ for $n=2$ and $m\ge 5$}	\label{A subquiver: wild 1}
			\end{figure}	\noindent 
			Moreover,  there is an arrow from $((1,2,1)+s)^+$  to the  vertex
			$((0,3,1)+s)^-$.  Thus  these
			seven vertices induce the graph obtained from $\widetilde A_5$ by attaching one leaf.
			
			  Now assume that $n\ge 3$ and $m\ge 4$. Put $s=(m-4)e_1$, and define the following vectors in $\mathbb{Z}^{n+1}$:
			\begin{align*}
				a&=e_1+e_3+e_{n+1}+s, & b&=2e_1+e_3+s,\\
				c&=2e_1+e_2+s,        & d&=e_1+2e_2+s,\\
				e&=2e_2+e_{n+1}+s,    & f&=e_2+e_3+e_{n+1}+s.
			\end{align*}
			Then the separated quiver $Q_B^s$ contains the full subquiver
				\begin{figure}[H]		\begin{tikzpicture}[scale=1.2, 
				every node/.style={font=\small},
				vertex/.style={circle, draw, minimum size=5pt, inner sep=0pt, fill=white},
				edge/.style={->, >=Stealth, thick, shorten >=1.5pt, shorten <=1.5pt}]
				
				\node[vertex, label=above:{$a^+$}] (a) at (90:1.3) {};
				\node[vertex, label=right:{$b^-$}] (b) at (30:1.3) {};
				\node[vertex, label=right:{$c^+$}] (c) at (-30:1.3) {};
				\node[vertex, label=below:{$d^-$}] (d) at (-90:1.3) {};
				\node[vertex, label=left:{$e^+$}] (e) at (-150:1.3) {};
				\node[vertex, label=left:{$f^-$}] (f) at (150:1.3) {};
				
				\draw[edge] (a) -- node[above] {$n+1$} (b);
				\draw[edge] (c) -- node[right] {$2$} (b);
				\draw[edge] (c) -- node[below] {$1$} (d);
				\draw[edge] (e) -- node[below] {$n+1$} (d);
				\draw[edge] (e) -- node[left] {$2$}(f);
				\draw[edge] (a) -- node[above] {$1$}(f);
			\end{tikzpicture}
		\caption{A full subquiver  of $Q_B^s$ for $n\ge 3$ and $m\ge 4$}	\label{A subquiver: wild 2}
	\end{figure}\noindent 	
			Define one more vector
			$
			g=e_1+e_4+e_{n+1}+s.
			$
			Note that there is an arrow from $a^+$ to $g^-$. Hence the seven vertices $a^+,b^-,c^+,d^-,e^+,f^-$ and $g^-$ span a full subquiver whose underlying graph is the Euclidean
			diagram $\widetilde A_5$ by attaching one leaf.

			In either case,  the Euclidean
			diagram $\widetilde A_5$ with an attached leaf is neither a Dynkin diagram nor a Euclidean diagram. Consequently, the connected component of the separated quiver containing this subquiver is of wild hereditary type, and thus $B$ is wild  by Lemma \ref{lem:wild-separated}.			
		\end{proof}

		\begin{proposition} \label{tame case}  ${\Pi}^{(2,4)}$ is tame representation-infinite of polynomial growth.
			
		\end{proposition}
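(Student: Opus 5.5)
The plan is to use the stable equivalence \eqref{IO-equivalence} together with \eqref{eq:dynkin-orbit}, in the same way as in the case $m=3$ of Proposition~\ref{representation-finite cases}. Tameness and polynomial growth are preserved under stable equivalence by Krause \cite{Kr}, and the same holds for representation-finiteness. So it suffices to identify $\underline{\mod}\,\Pi^{(2,4)}$ with an orbit category of a derived category of tubular or Euclidean type, and then read off the module category.

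First I would compute the stable $2$-Auslander algebra $\Gamma=\underline{\End}_{A_4^2}(\Pi^{(2,4)})$. By \cite[Theorem 5.47]{IO1}, as used for $m=3$, this should be $\Gamma\simeq A_3^{3}$, the $2$-Auslander algebra of $A_3^2$. The algebra $A_3^3$ has $\binom{5}{3}=10$ vertices. I expect it to be derived equivalent to a hereditary or canonical algebra of tubular type: with $10$ simples, the candidate is the canonical algebra of weight type $(2,3,6)$, which has $2+1+2+5=10$ vertices. I would check this by computing the Coxeter polynomial of $A_3^3$ from its Cartan matrix, which is determined by Definition~\ref{def.qns}. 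If it equals that of $(2,3,6)$, I would then construct an explicit tilting complex; alternatively I would cite the known derived classification of higher Auslander algebras of type $A$ (Ladkani). If instead $\Gamma$ turns out derived equivalent to a Euclidean hereditary algebra $A$, then \eqref{eq:dynkin-orbit} applies directly and gives $\underline{\mod}\,\Pi\simeq D^b(A)/\tau^{-1}[2]$.

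Next I would analyse the orbit category $\mathcal C=D^b(\Gamma)/\tau^{-1}[2]$ in the tubular case, using Amiot's description of generalized cluster categories (the functor $\mathbb S_3^{-1}=\tau^{-1}[2]$). For a tubular canonical algebra, $D^b(\coh\X)$ is fibred into tubular families indexed by $\mathbb Q\cup\{\infty\}$, and every indecomposable lies in one of them. The autoequivalence $\tau^{-1}[2]$ acts on slopes by a shift, so a fundamental domain involves only finitely many slopes—but there are still infinitely many rational slopes, which would give non-polynomial growth. Since the statement claims polynomial growth, I expect the Coxeter computation to show that $\Gamma$ is in fact of Euclidean (domestic) derived type, not tubular. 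In that case I would describe the fundamental domain of $D^b({\bf k}\widetilde\Delta)/\tau^{-1}[2]$ as two copies of $\mod{\bf k}\widetilde\Delta$ plus projectives. This consists of finitely many $\Z A_\infty$ or $\Z\widetilde\Delta$-type components plus finitely many one-parameter families of tubes. That gives tame, representation-infinite behaviour of polynomial (in fact linear) growth.

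Finally I would transfer back to $\Pi^{(2,4)}$. The projective-injective modules add only finitely many indecomposables, and stable equivalence preserves tameness and growth by \cite{Kr}, so $\Pi^{(2,4)}$ is tame of polynomial growth. Representation-infiniteness follows from the infinitely many tubes, or more simply from $B^{(2,4)}$: its separated quiver should contain a Euclidean component such as an $\widetilde A_5$ cycle, obtained as in Proposition~\ref{wild case} with $s=-e_1$ adjusted, and that yields infinitely many indecomposables. The main obstacle is the derived classification of $\Gamma=A_3^3$. I would attack it first via the Coxeter polynomial and, if needed, by explicit APR/BB-tilting reflections to reach a path algebra of Euclidean type.
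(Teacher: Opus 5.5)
\emph{Verdict: genuine gap.} Your argument for representation-infiniteness is the paper's own: the $\widetilde A_5$ cycle in the separated quiver of $B^{(2,4)}$, i.e.\ the $n=2$, $m\ge 5$ configuration shifted by $s=-e_1$. The tameness part, however, fails at its central step, and the cause is a misconception.

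\textbf{The tubular case is the one that occurs.} You discard the tubular case because infinitely many rational slopes would contradict polynomial growth. They do not. Polynomial growth bounds, for each dimension $d$, the number of one-parameter families needed to parametrize the indecomposables of dimension $d$. In a tubular situation only finitely many slopes contribute in each dimension, and the count grows polynomially. Tubular algebras are the standard examples of non-domestic algebras of polynomial growth.

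\textbf{The Euclidean case cannot occur.} As in the paper, $\Pi^{(2,4)}=\Pi_3(A_4^2)\simeq\End_{\mathcal C_{\X}}(T)$ for a cluster-tilting object $T$ in the cluster category of a weighted projective line $\X$ of tubular type $(2,3,6)$. Hence $\mod\Pi^{(2,4)}\simeq\mathcal C_{\X}/\operatorname{add}T[1]$. It contains, for every slope in $\mathbb Q\cup\{\infty\}$, a tubular family with tubes of ranks $2,3,6$, and all but finitely many of these tubes avoid the summands of $T$ and $T[1]$. So the stable AR quiver of $\Pi^{(2,4)}$, which is an invariant of $\underline{\mod}\,\Pi^{(2,4)}\simeq\mathcal C^3_\Gamma$, has infinitely many non-homogeneous tubes. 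By contrast, $\DDD^{\bo}(H)/\tau^{-1}[2]$ for a Euclidean hereditary $H$ has only the finitely many exceptional tubes of the two families coming from $\mod H$ and $\mod H[1]$. Therefore $\Gamma=A_3^3$ is not derived equivalent to a Euclidean hereditary algebra. It should be of tubular derived type $(2,3,6)$, the very candidate you first wrote down. Your Coxeter-polynomial test and reflection sequence cannot reach a Euclidean quiver, and followed literally your plan ends either stuck or with a false domestic picture.

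\textbf{The transfer to tameness is not automatic.} Even with the correct derived type, the last step is not a formality. \cite{Kr} compares two algebras that are stably equivalent. An orbit category such as $\DDD^{\bo}(\coh\X)/\tau^{-1}[2]$ is not a priori the stable category of an algebra already known to be tame. A list of indecomposable objects and AR components does not by itself produce the one-parameter families (given uniformly in each dimension by $\Pi$-$\mathbf{k}[x]$-bimodules) that tameness and polynomial growth refer to. Some genuine tameness theorem has to enter.

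\textbf{What the paper does instead.} It bypasses the derived classification of $\Gamma$ entirely:
\begin{enumerate}
\item It presents $\Pi^{(2,4)}$ as the Jacobian algebra of $(Q^{(2,4)},W)$.
\item It recognizes this quiver with potential in Jasso's list of self-injective cluster-tilted algebras of tubular type $(2,3,6)$ \cite{Ja}.
\item It applies \cite[Theorem~1]{GGS}: endomorphism algebras of cluster-tilting objects in tubular cluster categories are tame of polynomial growth.
\end{enumerate}
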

		\begin{proof}
			We first show that $\Pi^{(2,4)}$ is representation-infinite.  In the separated
			quiver $Q_B^s$  of the quiver of $B=B^{(2,4)}$, the following six  vertices span a full subquiver whose underlying graph is the Euclidean diagram $\widetilde A_5$:
			\begin{figure}[H]
			\begin{tikzpicture}[scale=1.2, 
				every node/.style={font=\small},
				vtx/.style={circle, draw, minimum size=5pt, inner sep=0pt, fill=white},
				arr/.style={->, >=Stealth, thick, shorten >=1.5pt, shorten <=1.5pt}]
				
				\node[vtx, label=90:{$(1,0,2)^+$}]   (v1) at (90:1.3)  {};
				\node[vtx, label=30:{$(2,0,1)^-$}]    (v2) at (30:1.3)  {};
				\node[vtx, label=-30:{$(2,1,0)^+$}]   (v3) at (-30:1.3) {};
				\node[vtx, label=-90:{$(1,2,0)^-$}]   (v4) at (-90:1.3) {};
				\node[vtx, label=-150:{$(0,2,1)^+$}]  (v5) at (-150:1.3){};
				\node[vtx, label=150:{$(0,1,2)^-$}]   (v6) at (150:1.3) {};
				
				\draw[arr] (v1) -- node[above] {$3$} (v2);
				\draw[arr] (v3) -- node[right] {$2$} (v2);
				\draw[arr] (v3) -- node[below] {$1$} (v4);
				\draw[arr] (v5) -- node[below] {$3$} (v4);
				\draw[arr] (v5) -- node[left] {$2$} (v6);
				\draw[arr] (v1) -- node[above] {$1$} (v6);
			\end{tikzpicture}	
		\caption{A full subquiver of $Q_B^s$ for $(n,m)=(2,4)$}\label{A subquiver: tame}	
	\end{figure}\noindent 
			  Then the radical square zero quotient $B^{(2,4)}$ is representation-infinite,
			and therefore ${\Pi}^{(2,4)}$ is as well.

			Recall that each algebra $\Lambda$ of global dimension at most two gives a quiver with potential  $(Q_{\Lambda},W_{\Lambda})$ whose Jacobian algebra is the $3$-preprojective algebra $\Pi_3(\Lambda)$ of $\Lambda$. Note that the algebra $A^2_4$ has global dimension two. Applied to $\Lambda=A^2_4$, this gives
			\begin{equation*}
			{\Pi}^{(2,4)}=\Pi_3(A_4^2)\simeq \mathcal{P}(Q,W),
				\label{eq:A42-Jacobian}
			\end{equation*}
			where $Q=Q^{(2,4)}$, and  \[
			W=
			\sum(\text{counter-clockwise cycles})-\sum(\text{clockwise cycles}).
			\]
			
			In Jasso's classification of self-injective cluster-tilted algebras of canonical type, 
			the Jacobian algebra of the quiver with potential $(Q,W)$ arises from the tubular cluster category of type $(2,3,6)$ 
			\cite[Theorem~1.3 and Figure~1.8]{Ja}. Indeed, by relabelling the 
			vertices, the first triangular quiver with potential in \cite[Figure~1.8]{Ja} coincides with
			$(Q,W)$.  Equivalently, there is a
			weighted projective line $\mathbb X$ of tubular type $(2,3,6)$ and
			a basic cluster-tilting object $T$ in its tubular cluster category
			$\mathcal C_{\mathbb X}=\DDD^{\bo}(\coh \X)/\tau^{-1}[1]$ such that
			\[
			\Pi_3(A_4^2)\simeq \End_{\mathcal C_{\mathbb X}}(T).
			\]
			It further follows from \cite[Theorem~1]{GGS} that the endomorphism algebra of each 
			cluster-tilting object in a tubular cluster category over an
			algebraically closed field is tame of polynomial growth.  Therefore $\Pi_3(A_4^2)$ is tame of polynomial growth.
		\end{proof}
		
		\begin{remark}
			The tame algebras $\Pi^{(1,5)}$ and $\Pi^{(2,4)}$  are both related
			to the tubular type $(2,3,6)$, but in different ways. For $\Pi^{(1,5)}$, its
			$\mathbb Z$-Galois covering is the repetitive algebra of a tubular
			algebra of type $(2,3,6)$ \cite{GLS}. 		
			In contrast,
			\(\Pi_3(A_4^2)\) is itself the Jacobian algebra of a
			 quiver with potential associated with a basic self-injective cluster-tilted algebras of 
			  tubular type $(2,3,6)$ \cite{Ja}.
			Thus the tubular structure appears through a Galois cover in the first case and directly through a
			Jacobian presentation in the second. In both cases, it accounts for
			tameness.
		\end{remark}

		\begin{proof}[Proof of Theorem~\ref{main thm}] The classical case $n=1$ is of finite type for $m\le4$, tame type for $m=5$,
			and wild type for $m\ge6$; see \cite[Proposition~3.3]{GLS} and the references therein. 		
			For $n\ge2$, Proposition~\ref{representation-finite cases} settles the cases $m\le3$ and gives the formulas, Proposition~\ref{tame case} treats $(n,m)=(2,4)$, and Proposition~\ref{wild case} proves wildness in all remaining cases.
		\end{proof}
		
%
		
		\section{Proofs of Theorems \ref{main thm 2} and \ref{main thm 3}}
		In this section, we give  complete proofs of Theorems \ref{main thm 2} and \ref{main thm 3}.
		 Let $\Lambda$	be an algebra.	
		Recall that a
		$\Lambda$-module $M$ is $\tau$-rigid if
		\[
		\Hom_\Lambda(M,\tau M)=0,
		\] where $\tau$ denotes the Auslander-Reiten translation of $\Lambda$.
		The algebra $\Lambda$ is called \emph{$\tau$-tilting finite} if there
		are only finitely many isomorphism classes of indecomposable
		$\tau$-rigid $\Lambda$-modules. Equivalently, $\Lambda$ has only
		finitely many isomorphism classes of basic support $\tau$-tilting
		modules; see \cite{DIJ}.
	
		The following theorem gives an analogue of  Proposition \ref{2.6}(3)  for $\tau$-tilting finiteness. It is a key tool for proving Theorem~\ref{main thm 2}.
		\begin{theorem} \label{Ad theorem} \cite[Theorem 3.1]{Ad}
			Let $\Lambda$ be an algebra with radical square zero.
			Then the following are equivalent:
			\begin{itemize}
				\item[(1)] $\Lambda$ is $\tau$-tilting finite.
				\item[(2)] Every single subquiver of the separated quiver for $\Lambda$ is a disjoint union of Dynkin quivers. 
			\end{itemize}
		\end{theorem}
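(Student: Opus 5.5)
The plan is to prove both implications by reducing to the hereditary algebra $\Sigma$ of Proposition~\ref{separated quiver algebra}. Single subquivers enter for two reasons. A single subquiver defines a hereditary quotient of $\Lambda$. Conversely, bricks over $\Lambda$ have top and radical supported on disjoint sets of vertices.

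\emph{(1)$\Rightarrow$(2).} Let $Q'$ be a single subquiver of $Q^s$. Put $X=\{x\mid x^+\in Q'_0\}$ and $Y=\{y\mid y^-\in Q'_0\}$; these are disjoint because $Q'$ is single.
\begin{itemize}
\item First factor out the idempotents of all vertices outside $X\cup Y$.
\item Then factor out all arrows that do not go from $X$ to $Y$.
\end{itemize}
Since $J^2=0$, the result is a quotient algebra of $\Lambda$ isomorphic to $\mathbf{k}Q'$. Fullness of $Q'$ in $Q^s$ guarantees that every arrow $x\to y$ with $x\in X$, $y\in Y$ survives.

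Quotients of $\tau$-tilting finite algebras are $\tau$-tilting finite, by the brick characterization of \cite{DIJ}: bricks over a quotient remain bricks over $\Lambda$. Hence $\mathbf{k}Q'$ is $\tau$-tilting finite. If some component of $Q'$ were not Dynkin, then $\mathbf{k}Q'$ would be a representation-infinite hereditary algebra. It would therefore have infinitely many indecomposable preprojective modules, which are rigid and hence $\tau$-rigid. This is a contradiction.

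\emph{(2)$\Rightarrow$(1).} By \cite{DIJ} it suffices to show that $\Lambda$ has only finitely many bricks. There are finitely many projective ones, so let $M$ be a non-projective brick.

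The key step is to show that no simple module $S$ occurs both in the top $M/MJ$ and in $MJ$. Suppose one did. Since $J^2=0$, we have $MJ\subseteq\soc M$, so $S$ is a submodule of $M$. The composite $M\twoheadrightarrow S\hookrightarrow MJ\subseteq M$ is then a nonzero endomorphism whose image lies in $MJ$, so it is nilpotent. This contradicts $\End_\Lambda(M)=\mathbf{k}$.

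Hence the support of $F(M)=(M/MJ,MJ;\varphi_M)$ lies in a single subquiver $Q'$ of $Q^s$. By Proposition~\ref{2.6}(2), $F(M)$ is an indecomposable $\mathbf{k}Q'$-module. By (2), $Q'$ is a disjoint union of Dynkin quivers, so there are only finitely many such modules. There are also only finitely many single subquivers.

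Finally, $F$ is injective on isomorphism classes of indecomposable non-projective modules: it is an equivalence $\underline{\mod}\,\Lambda\to\underline{\mod}\,\Sigma$ by Proposition~\ref{2.6}(1), and it preserves non-projectivity by Proposition~\ref{2.6}(2). It follows that there are only finitely many bricks, as required.

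The main obstacle is this last direction. Concretely, it requires two things: choosing to count bricks rather than $\tau$-rigid modules, and the nilpotent-endomorphism argument that forces bricks onto single subquivers. Working with $\tau$-rigidity directly is less convenient, since $\tau$ has to be computed through $F$. If one insists on $\tau$-rigid modules, the fallback is to combine the AR formula $\Hom(M,\tau M)=0\iff \Ext^1(M,\mathrm{Fac}\,M)=0$ with a similar support argument.
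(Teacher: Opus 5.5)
Your proof is correct. The paper gives no proof of this statement; it quotes it from \cite[Theorem~3.1]{Ad}. So your argument is best read as an alternative, self-contained proof, and it uses only tools the paper already invokes elsewhere: the brick criterion for $\tau$-tilting finiteness from \cite{DIJ}, closure under quotients, and Proposition~\ref{2.6}. Your nilpotent-endomorphism step is the same top/radical separation that drives Proposition~\ref{prop:all-tau-rigid}, but used in the converse direction. There, separation (from Lemma~\ref{lem:m3-radical}(2)) shows that modules are bricks; here, the brick property yields separation. The paper only ever applies (1)$\Rightarrow$(2). Your quotient construction would let it avoid \cite{Ad} altogether: the $\widetilde A_5$ single subquivers of Section~3 give hereditary quotients $\mathbf{k}Q'$ of $\Pi^{(n,m)}$, and these have infinitely many preprojective, hence $\tau$-rigid, modules. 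What your route does not give is a description of the $\tau$-rigid $\Lambda$-modules themselves. You obtain only finiteness of bricks, together with an explicit bound on their number.

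Two smaller remarks. First, in the injectivity step you tacitly use that stably isomorphic indecomposable non-projective modules are isomorphic. That is standard, but you can bypass the stable category: fullness of $F$, Nakayama's lemma and $\dim M=\dim M/MJ+\dim MJ$ show directly that $F$ reflects isomorphisms on all of $\mod\Lambda$. Second, the fallback in your last sentence would not work, because top/radical separation is special to bricks. Take $\Lambda$ with $J^2=0$ whose quiver has vertices $1,2$, a loop at $1$ and one arrow $a\colon 1\to 2$, and let $M=e_1\Lambda/a\Lambda$.
\begin{itemize}
\item $M$ is indecomposable and non-projective, with $M/MJ\simeq MJ\simeq S_1$.
\item One computes $\tau M\simeq S_2$, so $\Hom_\Lambda(M,\tau M)=0$ and $M$ is $\tau$-rigid.
\item Yet $F(M)$ is not supported on a single subquiver.
\end{itemize}
In a $\tau$-rigid approach, singleness has to come from elsewhere. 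One source is that the two terms of the minimal projective presentation of a $\tau$-rigid module share no indecomposable summand. So counting bricks is not just more convenient for your argument; it is what makes the support argument valid.
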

		
		Now we are ready to prove Theorem~\ref{main thm 2}.
		
		\begin{proof}[Proof of Theorem~\ref{main thm 2}] 
			Suppose first that $n=1$.  Then ${\Pi}^{(1,m)}$ is the classical
			preprojective algebra of type $A_m$. By \cite[Theorem~2.21]{Mi}, there is a
			bijection
			\[
			\mathrm{s}\tau\text{-}\mathrm{tilt}\,{\Pi}^{(1,m)}
			\longleftrightarrow W(A_m)
			\]
			between basic support $\tau$-tilting modules and the Weyl group of $A_m$.  Since
			 $W(A_m)$ is finite, this proves $\tau$-tilting finiteness for
			each $m$. 
			
			Suppose next that $n\ge 2$ and $m\le 3$. 
			By Theorem~\ref{main thm}, we have that ${\Pi}^{(n,m)}$ is representation-finite, 
			hence $\tau$-tilting finite. 
			
			It remains to consider $n\ge 2$ and $m\geq4$. Set
			\[
			B^{(n,m)}:=\Pi^{(n,m)}/\rad^2\Pi^{(n,m)}.
			\]
			The separated quiver  of the quiver of $B^{(n,m)}$ contains a single subquiver whose underlying graph is the Euclidean diagram $\widetilde A_5$; see Figure \ref{A subquiver: tame} for  $(n,m)=(2,4)$, and  Figure \ref{A subquiver: wild 1}  for $n=2$ and $m\ge 5$, and  Figure \ref{A subquiver: wild 2} for $n\ge 3$ and $m\ge 4$.  By Theorem \ref{Ad theorem},  $B^{(n,m)}$ is not $\tau$-tilting finite. It follows from \cite[Corollary 1.9]{DIRRT} that
			the class of \(\tau\)-tilting finite algebras is closed under taking quotient algebras, and thus $\Pi^{(n,m)}$ is not $\tau$-tilting finite; otherwise  $B^{(n,m)}$ would be $\tau$-tilting finite, a contradiction.			
		\end{proof}

		\begin{lemma}\label{lem:m3-radical}
			Let $n\geq2$, $\Lambda=\Pi^{(n,3)}$ and $J=\rad \Lambda$. For $x=(x_1,\ldots,x_{n+1})\in Q_0^{(n,3)}$, put
			$
			\rho(x):=(x_{n+1},x_1,\ldots,x_{n}).
			$
			Then the following hold.
			\begin{enumerate}
				\item $J^3=0$, and $e_xJ^2$ is one-dimensional with terminating vertex
				$\rho(x)$. In particular, we have
				\[
				\soc(e_x\Lambda)=e_xJ^2\simeq S_{\rho(x)}.
				\]
				\item Let $B=\Lambda/J^2$ and $Q^s$ be the
				separated quiver of the quiver $Q$ of $B$.
				Then $Q^s$ has $n+1$ connected components, and each of them  has the underlying graph $\mathbb{A}_{n+2}$. Moreover,
				for each $x\in Q_0$, the vertices $x^+$ and $x^-$ belong to distinct connected components.
			\end{enumerate}
		\end{lemma}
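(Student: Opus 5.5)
We first make the quiver of $\Lambda=\Pi^{(n,3)}$ explicit, using Theorem~\ref{Preprojective Auslander cut}. The vertices $x\in Q_0^{(n,3)}$ are the vectors $2e_i$ and $e_i+e_j$ with $i<j$, where indices are read modulo $n+1$. An arrow of type $k$ leaves $x$ exactly when $x_k\ge1$, and it goes to $x+f_k=x-e_k+e_{k+1}$. Since $B=\Lambda/J^2$ has the same quiver $Q=Q^{(n,3)}$, both parts reduce to combinatorics of this quiver together with the relations in Definition~\ref{def.qns}.

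For part (1), we list the paths of length two starting at $x$ and reduce them modulo the relations.
\begin{itemize}
\item If $x=2e_i$, the only arrow out of $x$ has type $i$, and it ends at $e_i+e_{i+1}$. The path $i\cdot(i+1)$ is zero, because the intermediate vertex $x+f_{i+1}$ is not in $Q_0$. So only $i\cdot i\colon x\to 2e_{i+1}=\rho(x)$ survives.
\item If $x=e_i+e_j$ with $i\ne j$, the paths $i\cdot j$ and $j\cdot i$ are identified, and both end at $e_{i+1}+e_{j+1}=\rho(x)$. Every other composite $k\cdot l$ uses a coordinate that is zero either at $x$ or at $x+f_l$. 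Checking these cases one by one shows that each such path is $0$ by the zero relation. The only coincidences occur when $j=i+1$, and they again land at $\rho(x)$.
\end{itemize}
Hence $\dim e_xJ^2\le1$, and $e_xJ^2$ is spanned by a path ending at $\rho(x)$.

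Next we show $J^3=0$. For each arrow $\rho(x)\to\rho(x)+f_k$ we rewrite the surviving length-two path, post-composed with that arrow, using commutativity relations. In each case we reach a factor whose intermediate vertex has a negative coordinate, so the composite is $0$.

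The step I expect to be the real obstacle is showing $e_xJ^2\ne0$, since relations only give upper bounds. I would first try to show that $e_x\Lambda e_{\rho(x)}\neq0$ directly from the structure of $\Pi$ as an $n$-cluster tilting module in \cite{IO1,IO2}: the degree-one part of $\Pi$ gives a nonzero map. Failing that, I would construct an explicit three-dimensional representation with top $S_x$, middle $J/J^2$ and socle $S_{\rho(x)}$, and check the relations on it. Once $e_xJ^2\ne0$, self-injectivity and $J^3=0$ give $\soc(e_x\Lambda)=e_xJ^2\simeq S_{\rho(x)}$.

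For part (2), define the weight $w(x)=\sum_k kx_k \bmod (n+1)$. Every $f_k$ increases $w$ by $1$ modulo $n+1$; for $f_{n+1}$ this is $-(n+1)+1\equiv1$. We then set $c(x^+)=w(x)$ and $c(x^-)=w(x)-1$. Every arrow $x^+\to(x+f_k)^-$ preserves $c$, so $c$ is constant on connected components. Since $c(x^+)\ne c(x^-)$, the vertices $x^+$ and $x^-$ always lie in different components.

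It remains to show that each class $c=r$ is a single path with $n+2$ vertices. We use the following facts.
\begin{itemize}
\item Out-degrees and in-degrees are at most $2$: $x^+$ has out-degree $|\operatorname{supp}x|$, and $y^-$ has in-degree equal to the number of $k$ with $y_{k+1}\ge1$.
\item In total there are $(n+1)(n+2)$ vertices and $\sum_x|\operatorname{supp}x|=(n+1)^2$ arrows.
\item The cyclic shift $\rho$ adds $\sum_k x_k=2$ to $w$, so it permutes the classes.
\end{itemize}
From this we aim to show that each class has $n+2$ vertices and $n+1$ edges. Concretely, we would trace the walk starting at a degree-one vertex of the form $(2e_i)^{\pm}$ and check that it exhausts its class. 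A connected graph in which all degrees are at most $2$, with $n+2$ vertices and $n+1$ edges, is a path, so each component has underlying graph $\mathbb A_{n+2}$. This also shows there are exactly $n+1$ components.
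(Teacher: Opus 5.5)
Your case analysis in (1) is a valid upper-bound argument. The paper does this step uniformly instead: in a nonzero path adjacent arrows can be swapped, so moving all arrows of type $i$ to the front forces $a_i\le x_i$. Either way you get $J^3=0$, and $e_xJ^2$ is spanned by a single path ending at $\rho(x)$.

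The genuine gap is the lower bound $e_xJ^2\neq0$. You leave it open, but both ``one-dimensional'' and $\soc(e_x\Lambda)=e_xJ^2$ depend on it.

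\begin{itemize}
\item Your first route does not work as stated. The surviving path contains exactly $x_{n+1}$ arrows of type $n+1$. So it has degree $x_{n+1}\in\{0,1,2\}$ for the grading given by the cut $C_0$, whose degree-zero part is $A_3^n$. A degree-one argument therefore reaches only the vertices with $x_{n+1}=1$.
\item Your second route works, but the module has dimension $2+|\operatorname{supp}x|$, i.e.\ four when $x=e_i+e_j$, not three. You would still have to check every relation on it.
\end{itemize}

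The missing observation, which the paper uses implicitly, is this. The ideal of Definition~\ref{def.qns} is generated by paths and differences of paths. Hence $\Lambda$ has a basis given by the commutation classes of paths none of whose members contains a zero-relation subpath. The class $\{i\,i\}$ at $2e_i$ and the class $\{ij,ji\}$ at $e_i+e_j$ contain no such subpath, so they are nonzero.

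Self-injectivity also gives it. If $e_xJ^2=0$, then $e_xJ\subseteq\soc(e_x\Lambda)$, which is not simple when $|\operatorname{supp}x|=2$. For $x=2e_i$, $e_x\Lambda$ would be a length-two uniserial injective hull of $S_{e_i+e_{i+1}}$, which is impossible because two arrows end at $e_i+e_{i+1}$.

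In (2), your weight $w$ is exactly the paper's $\ell$, and the separation of $x^+$ from $x^-$ is the same argument. However, ``each class has $n+2$ vertices and $n+1$ edges'' does not follow from the facts you list. The shift $\rho$ moves $c$ by $2$, so for $n+1$ even it only relates classes of the same parity. The global totals do not force equal class sizes.

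The paper settles this by writing each fiber as $p_a=\langle a,c-a\rangle^+$, $q_a=\langle a,c+1-a\rangle^-$. It counts $n+2$ vertices via the solutions of $2a=c$ and $2a=c+1$, and finds exactly one arrow of each type. It then gets connectivity from $p_a\to q_{a+1}\leftarrow p_{a+1}$.

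Your tracing idea also works if you carry it out:
\begin{itemize}
\item The leaves of $Q^s$ are exactly the $2(n+1)$ vertices $(2e_i)^{\pm}$, and every other vertex has degree $2$.
\item The walk from $(2e_i)^+$ runs through $\langle i-t,i+t\rangle^+$ and $\langle i-t,i+t+1\rangle^-$. The vertex reached after $s$ steps is a leaf if and only if $s\equiv0\pmod{n+1}$. The same holds starting from $(2e_i)^-$.
\item Hence the leaves pair up into $n+1$ paths with $n+2$ vertices each. These already use all $(n+1)(n+2)$ vertices, which rules out cycle components.
\end{itemize}
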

		\begin{proof} (1) 
			Let $p$ be a nonzero path starting at $x$, and $a_i$ denote the number of
			arrows of type $i$ occurring in \(p\). If \(ij\)
			is a subpath of \(p\), then Definition~\ref{def.qns} gives either
			\(ij=ji\), provided that \(ji\) exists, or \(ij=0\). The latter is
			impossible since \(p\ne0\). Thus adjacent arrows in \(p\) may be
			interchanged. Moving all arrows of type $i$ to the beginning of $p$ gives $a_i\leq x_i$ for $1\le i\le n+1$. Since $\sum_{i=1}^{n+1} x_i=2$, $p$ has length at most two, and therefore $J^3=0$.
			
			Now assume that $p$ has length two. Since $\sum_{i=1}^{n+1} a_i=2= \sum_{i=1}^{n+1} x_i$ and $a_i\le x_i$ for all $i$, it follows that  $a_i=x_i$ for all $i$.
			The terminating vertex of $p$ is $$t(p)=x+\sum_{i=1}^{n+1}x_if_i=\rho(x).$$ 
			There is a nonzero path of length two starting at $x$. Indeed, if $x_i=2$, the path $x \xrightarrow{i} x+f_i \xrightarrow{i} x+2f_i$  is nonzero; if $x_i=x_j=1$ with $i\neq j$, then the paths $ij$ and $ji$ are  nonzero and are identified by a commutativity relation. Successive interchanges identify any two nonzero length-two paths starting at $x$.
			 Hence $e_xJ^2$ is one-dimensional. Since $J^3=0$, one has $e_xJ^2\subset \soc(e_x\Lambda)$.
			Since $\Lambda$ is
			self-injective, the indecomposable projective module $e_x\Lambda$ has simple			
			socle, which is necessarily $e_xJ^2$.
			
			(2) Put $I=\Z/(n+1)\Z$. We use cyclic indices in $I$.  Note that the quiver of $B$ is
			$Q=Q^{(n,3)}$. Since \(\sum_{i=1}^{n+1}x_i=2\), the vertex $x$ records the positions
			of two units among the $n+1$ coordinates. Then
			each vertex \(x\) is uniquely represented by a two-element
			multiset \(\langle a,b\rangle\) of elements of \(I\). 	
			Define a map $\ell\colon Q_0\to I$ given by
			\[
			\ell(x)=\ell(\langle a,b\rangle):={a}+{b}, 
			\]  
			Then, for each arrow $x\to y$ in $Q^{(n,3)}_1$, we have $\ell(y)=\ell(x)+{1}$. 
			Moreover, we define a map $\theta \colon Q^{s}_0 \to I$ given by 
			$$ \theta(x^{+})=\ell(x) \quad \text{and} \quad \theta(x^{-})=\ell(x)-{1}$$  for any $x\in Q_0$.
			Then, for each arrow $x^{+}\to y^{-}$ in $Q^{s}$,
			$\theta(y^{-})=\ell(y)-{1}=\ell(x)=\theta(x^{+}),$ so $\theta$ is constant on each connected component of $Q^{s}$.
			
			Fix $ c\in I$, and let $\Gamma_{ c}$ be the full subquiver of $Q^s$ on the fiber
			$\theta^{-1}( c)$. Its  vertices may be written as
			$$p_{ a}=\langle a,c-a\rangle^+ \quad\text{and} \quad q_{ a}=\langle a, c+ 1- a\rangle^-\quad( a\in I)$$
			subject to the identifications
			$$p_{ a}=p_{  c-  a}\quad \text{and}\quad q_{ a}=q_{ c+ 1-  a}\quad ( a\in I).$$
			For $ d\in I$, put
			$
			r_{ d}=\#\{ a\in I\mid  {2a}=d\}.
			$
			Hence we have
			\[
			|(\Gamma_{c})_0|
			=\frac{n+1+r_c}{2}+\frac{n+1+r_{c+ 1}}{2}.
			\]
			If $n+1$ is odd, then the map $a\mapsto 2a$ on $I$ is bijective, and thus $r_c=r_{c+ 1}=1$. 
			If $n+1$ is even, exactly
			one of $c$ and $c+ 1$ lies in $2I$;  for that element the equation $2a=d$ has two solutions, and for the other it has none.
			 In either case, we have $r_c+r_{c+ 1}=2$, and thus
			\[
			|(\Gamma_c)_0|=n+2.
			\]
			
			For each \(a\in I\), there is precisely one arrow of type $a$ in
			\(\Gamma_c\), namely
			$$\alpha_a\colon p_a\longrightarrow q_{a+ 1}.$$ 
			Indeed, 
			let $\alpha\colon x^+ \to y^-$ be an arrow of type \(a\) in \(\Gamma_c\).
			Then $x^+$ contains $a$ and \(\theta(x^+)=c\);
			hence 
			$x^+=\langle a,c-a\rangle^{+}=p_a$.
			The arrow of type $a$ with source
			$p_a=\langle a,c-a\rangle ^{+}$  gives the target
			$y^-=\langle a+ 1,c-a\rangle^{-}=q_{a+ 1}.$
		 Thus
			\(a\mapsto\alpha_a\) gives a bijection
			$
			I\xrightarrow{\sim}(\Gamma_c)_1,
			$
			and we have $$|(\Gamma_c)_1|=n+1.$$
			
			The identities $p_{c-a-1}=p_{a+1}$ and $q_{c-a}=q_{a+1}$ give
			$$p_a\xrightarrow{\alpha_{a}}q_{a+1}\xleftarrow{\alpha_{c-a-1}}p_{a+1}.$$
			Thus $p_a$ and $p_{a+1}$ lie in the same connected component for each $a\in I$, so all vertices $p_a$, ${a\in I}$ are connected. Each $q_a$ is the target of $\alpha_{a-1}$ and hence $\Gamma_c$ is connected.
			
			Let $\widetilde{\Gamma}_c$ be the underlying graph of $\Gamma_c$. Since $\Gamma_c$ is connected and has $n+2$ vertices and $n+1$ arrows, its underlying graph $\widetilde{\Gamma}_c$ is a tree. In particular,
			it has no multiple edges.  Each vertex of $\widetilde{\Gamma}_c$ has degree at
			most two. Indeed, for $p_a$, only arrows of types $a$ and $c-a$ may occur, and
			for $q_a$, only arrows of types $a-{1}$ and $c-a$ may occur.
			Consequently, $\widetilde{\Gamma}_c$ has the underlying graph $\mathbb{A}_{n+2}$.

			Each fiber of $\theta$ is connected, whereas $\theta$ is constant on
			connected components. Hence the $\Gamma_c$, $c\in I$, are precisely
			the connected components of $Q^{\mathrm s}$, and their number is
			$|I|=n+1$. Finally,
			\[
			\theta(x^+)=\ell(x)
			\ne \ell(x)- 1=\theta(x^-),
			\]
			since $n+1\geq3$. Thus $x^+$ and $x^-$ belong to distinct connected
			components.			
		\end{proof}

			\begin{example}\label{example:connected comp} 			 						
				Let $B=\Pi^{(2,3)}/\rad^2\Pi^{(2,3)}$. The quiver $Q$ of $B$ is $Q^{(2,3)}$; see Example \ref{example}. Its separated quiver  $Q^s$ has the three connected components: 
				\[
				\xymatrix@C=1.2cm{
					020^-
					&
					110^+\ar[l]_{1}\ar[r]^{2}
					&
					101^-
					&
					002^+\ar[l]_{3}
				},
				\]
				\[
				\xymatrix@C=1.2cm{
					020^+\ar[r]^{2}
					&
					011^-
					&
					101^+\ar[l]_{1}\ar[r]^{3}
					&
					200^-
				},
				\]
				\[				
				\xymatrix@C=1.2cm{
					200^+\ar[r]^{1}
					&
					110^-
					&
					011^+\ar[l]_{3}\ar[r]^{2}
					&
					002^-
				}.
				\]
				Each of them  has underlying graph $\mathbb{A}_{4}$. 
				Moreover,
				for each $x\in Q_0$, the vertices $x^+$ and $x^-$ belong to distinct connected components.			
			\end{example}
		
			Recall that for an algebra $\Lambda$,  $M\in \mod \Lambda$ is  called a \emph{brick} if $\End_\Lambda(M)\simeq {\mathbf{k}}$.

			\begin{proposition}\label{prop:all-tau-rigid}
				For any $n\geq2$ and $m\leq3$,  each indecomposable
				$\Pi^{(n,m)}$-module is a brick and $\tau$-rigid.
			\end{proposition}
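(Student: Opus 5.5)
We argue case by case in $m$, using the explicit structure from Proposition~\ref{representation-finite cases} and Lemma~\ref{lem:m3-radical}. For $m=1$, $\Pi^{(n,1)}\simeq\mathbf{k}$ and the only indecomposable module is $\mathbf{k}$, a projective brick. For $m=2$, $\Pi^{(n,2)}$ is the cyclic Nakayama algebra with radical square zero and $n+1\ge3$ vertices. Its indecomposables are the simples $S_i$ and the projectives $P_i$ of length two with distinct top and socle. Each of these is a brick: a nonzero non-invertible endomorphism of $P_i$ would map the top to the socle, which is impossible since $\operatorname{top}P_i\not\simeq\soc P_i$ (this uses $n+1\ge2$). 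Projectives are trivially $\tau$-rigid. For the simples, $\tau S_i$ is again a simple $S_j$ with $j\ne i$ (read off from Figure~\ref{AR quiver for (n,2)}), so $\Hom(S_i,\tau S_i)=0$.

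For $m=3$, set $\Lambda=\Pi^{(n,3)}$ and $J=\rad\Lambda$. By Lemma~\ref{lem:m3-radical}(1), $J^3=0$ and each $e_x\Lambda$ has one-dimensional $J^2$ equal to its socle $S_{\rho(x)}$.

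\textbf{Non-projective indecomposables.} Let $M$ be indecomposable non-projective. Since $\Lambda$ is self-injective, $M$ has no projective summand and hence $MJ^2=0$, so $M$ is a $B$-module where $B=\Lambda/J^2$. Proposition~\ref{2.6} then gives an indecomposable $\Sigma$-module $F(M)$ with $\underline{\End}_B(M)\simeq\underline{\End}_\Sigma(F(M))$. By Lemma~\ref{lem:m3-radical}(2), $F(M)$ is supported on one component $\Gamma_c$ of type $\mathbb A_{n+2}$, so it is a brick, being an indecomposable module over a hereditary algebra of type $A$.

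\textbf{Brick property.} I would lift the brick property to $M$ as follows. The functor $F$ is faithful on $\mod B$: the top part determines $g$ modulo maps into $MJ$, and one checks that a map $g$ with $F(g)=0$ kills $M$ into $MJ$ and vanishes on $MJ$. Indeed $F(g)$ records $g$ on $M/MJ$ and on $MJ$, while $g(M)\subset MJ$ together with $g|_{MJ}=0$ might still allow a nonzero map $M/MJ\to MJ$. This is exactly the kernel of $F$ on morphisms, namely maps factoring through the semisimple top into the socle. To rule these out I would use the second statement of Lemma~\ref{lem:m3-radical}(2): since $x^+$ and $x^-$ lie in distinct components, the top of $M$, supported on $x$ with $x^+\in\Gamma_c$, and $MJ$, supported on $y$ with $y^-\in\Gamma_c$, share no common simple. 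Hence $\Hom(M/MJ,MJ)=0$, $F$ is faithful on such $M$, and $\End_\Lambda(M)=\End_B(M)\hookrightarrow\End_\Sigma(F(M))=\mathbf{k}$.

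\textbf{Projectives.} For $e_x\Lambda$, any non-invertible endomorphism sends the top $S_x$ into $e_x J$. Its image in $J^2$ would require $S_x\simeq S_{\rho(x)}$, i.e.\ $\rho(x)=x$, which is impossible for $n\ge2$ because $x$ has coordinate sum $2$ over $n+1\ge3$ entries. Its image in $J/J^2$ would require a loop at $x$, and there is none. Thus projectives are bricks.

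\textbf{$\tau$-rigidity.} This is the main obstacle. Projectives are $\tau$-rigid. For non-projective $M$, use $\tau=\Omega^2\nu$ for self-injective algebras, or more concretely the AR-translation in $\underline{\mod}\,\Lambda\simeq\DDD^{\bo}(\mathbf{k}A_{n+2})/\tau^{-1}[n]$ from \eqref{stable category of Pi^{(n,3)}}. First I would show $\underline{\Hom}(M,\tau M)=0$ by computing in the orbit category: $\Hom$ between $X$ and $\tau X$ in $\DDD^{\bo}(\mathbf{k}A_{n+2})$ vanishes, and the other orbit terms $\Hom(X,\tau^{1-k}X[kn])$ for $k\neq0$ vanish because $n\ge2$ pushes the shift beyond the hereditary range of $\Hom$-degrees $0,1$. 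This is where $n\ge2$ is essential; it fails for $n=1$. Second, I would show that no nonzero map $M\to\tau M$ factors through a projective. Such a map factors as $M\to e_y\Lambda\to\tau M$ and must send the top of $M$ onto the socle $S_{\rho(y)}$. Since $\tau M$ is also a $B$-module, the map $e_y\Lambda\to\tau M$ kills the socle, so the composite is zero. This gives $\Hom_\Lambda(M,\tau M)=0$.
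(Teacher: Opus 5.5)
Your treatment of $m\le 2$ and your brick argument for $m=3$ are correct and essentially follow the paper. The genuine gap is in the $\tau$-rigidity of non-projective modules for $m=3$, in your second step. You claim that a map $M\to e_y\Lambda$ must send $M$ into the socle $e_yJ^2$. This is false. From $MJ^2=0$ you only get that the image lies in $\{p\in e_y\Lambda\mid pJ^2=0\}=e_yJ$, which has nonzero top $e_yJ/e_yJ^2$. A map $e_y\Lambda\to\tau M$ kills $e_yJ^2$ but need not kill $e_yJ/e_yJ^2$, so nothing forces the composite to vanish.

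This step never uses $n\ge 2$, and the argument breaks for $n=1$. Take the preprojective algebra $\Pi^{(1,3)}$ of type $A_3$, with vertices labelled $1,2,3$. There one still has $J^3=0$ and $e_xJ^2=\soc(e_x\Lambda)$, so $MJ^2=0$ for non-projective indecomposables. Your first step also gives $\underline{\Hom}(M,\tau M)=0$ there: with $n=1$ the $k=1$ orbit term is $\Ext^1_{\mathbf{k}A_3}(X,X)=0$, and the stable category is the cluster category of type $A_3$, in which every indecomposable is rigid. Nevertheless $M=P_2/\soc P_2$ has $\tau M\simeq\rad P_2$. The nonzero map $M\twoheadrightarrow S_2\hookrightarrow\rad P_2$ factors as $M\twoheadrightarrow\rad P_1\hookrightarrow P_1\twoheadrightarrow P_1/\soc P_1\hookrightarrow\rad P_2$, and the image of $M\to P_1$ is $\rad P_1$, not the socle. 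So your two steps together would ``prove'' that every indecomposable $\Pi^{(1,3)}$-module is $\tau$-rigid, which is false. Your remark that $n\ge2$ is what makes the first step work is also misplaced: the first step holds for $n=1$, and $n\ge 2$ is needed exactly in the projective-factoring part.

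The paper never computes $\tau$. Having shown that every indecomposable is a brick, it uses that $\Pi^{(n,m)}$ is representation-finite, hence $\tau$-tilting finite. Then \cite[Theorem~4.2]{DIJ} gives a bijection between indecomposable $\tau$-rigid modules and bricks, so $|\itrig\Pi^{(n,m)}|=|\operatorname{brick}\Pi^{(n,m)}|=|\Ind\Pi^{(n,m)}|$. Since $\itrig\Pi^{(n,m)}\subseteq\Ind\Pi^{(n,m)}$ is finite, every indecomposable is $\tau$-rigid.

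If you want to keep your direct route, replace the socle claim by a degree argument using $\ell$ from Lemma~\ref{lem:m3-radical}(2). The module $M$ is concentrated in $\ell$-degrees $c$ (top) and $c+1$ (radical). A nonzero composite $M\to e_y\Lambda\to\tau M$ forces $\ell(y)=c-1$ with $S_y$ in the top of $\tau M$. You would then have to show that the top of $\tau M=\nu\Omega^2M$ lies in degree $c$ or $c+1$, tracking how $\Omega$ raises degrees and how $\nu$ lowers them by $2$ (because $\soc e_x\Lambda\simeq S_{\rho(x)}$). This is where $n+1\ge 3$ genuinely enters.
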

			
			\begin{proof} We first prove the brick assertion.
				For \(m=1\),  one has $\Pi^{(n,1)}\simeq{\mathbf k}$, and the assertion is immediate.
				
				For $m=2$,
				the algebra $\Pi^{(n,2)}$ is a cyclic Nakayama algebra with radical square zero.
				Its indecomposable modules are the simple modules and the indecomposable
				projective modules, and each of them has endomorphism algebra \({\mathbf k}\); see Figure \ref{AR quiver for (n,2)}.
				
				For \(m=3\), let
				$\Lambda=\Pi^{(n,3)}$ and $J=\rad\Lambda$.
				First we consider an indecomposable projective module
				\(P_x=e_x\Lambda\). By Lemma \ref{lem:m3-radical},  $J^3=0$, and every nonzero path of length two 
				starting at $x$ terminates at $\rho(x)$. Moreover, $\rho(x)\neq x$; otherwise all coordinates of $x$  would be equal, which is impossible because $n+1\ge 3$ and $\sum_{i=1}^{n+1} x_i=2$.  Since the quiver has no loops, there is no nontrivial nonzero path starting and ending at $x$.
				 Hence
				\[
				\End_{\Lambda}(P_x)
				\simeq e_x\Lambda e_x
				\simeq{\mathbf k}.
				\]
				
				Now let \(M\) be an indecomposable nonprojective \(\Lambda\)-module. We
				claim that
				$
				MJ^2=0.
				$
				Suppose that $MJ^2\neq 0$.  There exist a vertex $x\in Q_0^{(n,3)}$ and 
				\(u\in Me_x\) such that \(uJ^2\neq0\). This induces a morphism
				\[
				\varphi_u\colon P_x=e_x\Lambda\to M,
				\quad
				a\mapsto ua
				\] in $\mod \Lambda$. 
				By Lemma \ref{lem:m3-radical}, 
				$\soc P_x=e_xJ^2$ is simple, and the restriction of $\varphi_u$ to this socle is nonzero. Thus $\ker\varphi_u\cap\soc P_x=0$.  Since the socle of $P_x$ is essential,
				we have \(\ker\varphi_u=0\), and thus
				\(\varphi_u\) is injective. Since $\Lambda$ is self-injective, $P_x$ is injective; hence \(\varphi_u\) splits. Consequently, \(P_x\) is a direct summand of \(M\), contradicting the
				assumption that \(M\) is indecomposable and nonprojective. Hence
				\(MJ^2=0\).

				Set
				$
				B=\Lambda/J^2$,
				$
				\overline J=J/J^2=\rad B$ and
				\begin{align*}
					\Sigma=\begin{bmatrix}B/\overline J&\overline J\\[2pt]0&B/\overline J\end{bmatrix}.
				\end{align*} 
				 Since $MJ^2=0$,
				 $M$ is naturally an indecomposable $B$-module. Consider the functor (see the definition in the paragraph before Proposition \ref{2.6})
				\[
				F\colon\mod B\to\mod\Sigma,
				\quad
				M\mapsto (M/M\overline J,M\overline J;\varphi_M);
				\]
				By Proposition \ref{2.6}(2), \(F(M)\) is indecomposable. Note that the quiver of $B$ is
				$Q=Q^{(n,3)}$.  By	Lemma~\ref{lem:m3-radical}(2),  each connected component of the separated quiver $Q^s$
				of $Q$ has underlying graph \(\mathbb A_{n+2}\). Then \(F(M)\) is an
				indecomposable representation of a quiver of type \(\mathbb A_{n+2}\), and 
				 thus
				\[
				\End_{\Sigma}(F(M))\simeq{\mathbf k}.
				\]
				
				Next we show
				that the natural map
				\[
				\Phi_M\colon\End_B(M)\to\End_{\Sigma}(F(M)), \quad f\mapsto F(f)
				\]
				is injective. Let $f\in\End_B(M)$ such that $F(f)=0$. By the
				definition of \(F\), the induced map on \(M/M\overline J\) and the
				restriction of \(f\) to \(M\overline J\) are both zero. Hence
				$f(M)\subseteq M\overline J$ and $ f(M\overline J)=0$,
				so \(f\) induces a \(B/\overline J\)-module homomorphism
				\[
				\widetilde f\colon M/M\overline J\to M\overline J,
				\quad
				m+M\overline J\mapsto f(m).
				\]			
				Since $F(M)$ is indecomposable, its support is contained in only one
				connected component of $Q^s$. If there exists a vertex \(x\) such that 
				$$(M/M\overline J)e_x\neq0 \quad\text{and}\quad (M\overline J)e_x\neq0,$$ 
				then  both \(x^+\) and \(x^-\) belong to the support of \(F(M)\), and
				hence they lie in the same connected component of \(Q^s\). This contradicts
				Lemma~\ref{lem:m3-radical}(2). Therefore the two semisimple \(B/\overline J\)-modules
				\(M/M\overline J\) and \(M\overline J\) have disjoint supports, and
				so
				\[
				\Hom_{B/\overline J}
				(M/M\overline J,M\overline J)=0.
				\]
				This shows that \(\widetilde f=0\), and so \(f=0\). Thus $\Phi_M$ is
				 injective, and moreover $\Phi_M$ is
				 an isomorphism since $F$ is full by Proposition \ref{2.6}(1).
				 Hence we obtain
				\[
				\End_{\Lambda}(M)
				=\End_B(M)
				\simeq{\mathbf k}.
				\] 
				
			We have proved that each indecomposable module is a brick. By
			Theorem~\ref{main thm 2}, \(\Pi^{(n,m)}\) is \(\tau\)-tilting finite for
			\(n\geq 2\) and \(m\leq 3\). Hence, by \cite[Theorem 4.2]{DIJ}, there is a bijection between
			the isomorphism classes of indecomposable \(\tau\)-rigid modules and
			bricks. Thus we have
			\[
			\bigl|\itrig \Pi^{(n,m)}\bigr|
			=
			\bigl|\operatorname{brick}\Pi^{(n,m)}\bigr|
			=
			\bigl|\Ind \Pi^{(n,m)}\bigr|,
			\] where for an algebra $\Lambda$, $|\itrig \Lambda|$ and $|\operatorname{brick} \Lambda|$ denote the number of  indecomposable \(\tau\)-rigid modules and bricks, respectively. This shows that each indecomposable \(\Pi^{(n,m)}\)-module is \(\tau\)-rigid.
			\end{proof}
		
		Now we are ready to prove Theorem~\ref{main thm 3}.
		
		\begin{proof}[Proof of Theorem~\ref{main thm 3}] (1) $\Leftrightarrow$ (2) $\Leftrightarrow$ (3) Immediate from Theorems \ref{main thm} and \ref{main thm 2}.
									
			(1) $\Rightarrow$ (4) This follows directly from Proposition \ref{prop:all-tau-rigid}.
			
			(4) $\Rightarrow$ (1) Suppose that $m\ge 4$. By
			Theorem~\ref{main thm}, $\Pi^{(n,m)}$ is representation-infinite. By assumption, each indecomposable  $\Pi^{(n,m)}$-module is $\tau$-rigid. It follows from \cite[Theorem 1.1]{MP} that $\Pi^{(n,m)}$ is locally representation-directed, and thus it is representation-finite by \cite[Proposition 4.1]{MP}, a contradiction.			
		\end{proof}

			\vskip 5pt
		\noindent {\scriptsize   \noindent  Weikang Weng\\
			School of Mathematical Sciences, \\
			Xiamen University, Xiamen, 361005, Fujian, PR China.\\
			E-mail: 
			wkweng@stu.xmu.edu.cn\\ }
		\vskip 3pt
		
	\end{document}